\newtheorem{thm}{Theorem}
\newtheorem{conjecture}{Conjecture}
\newtheorem{lemma}{Lemma}[section]
\newtheorem{theorem}[lemma]{Theorem}
\newtheorem{coro}[lemma]{Corollary}
\newtheorem{remark}[lemma]{Remark}
\def\N{\overline{N}}
\def\R{\mathbb{R}}
\def\Z{\mathbb{Z}}
\def\Q{\mathbb{Q}}
\def\SL{\mathrm{SL}(2,\mathbb{Z})}
\def\={\;=\;}
\def\.={\;\dot{=}\;}
\def\Im{\mathrm{Im}}
\def\Re{\mathrm{Re}}
\def\l{\ell}
\def\H{\mathcal{H}}
\def\G{\Gamma}
\DeclarePairedDelimiter\floor{\lfloor}{\rfloor}
\begin{document}
 \title[Values of modular functions at real quadratics]{Values of modular functions at real quadratics and   conjectures of Kaneko}

\author{P. Bengoechea}
\address{ ETH, Mathematics Dept.
\\CH-8092, Z\"urich, Switzerland}
\email{paloma.bengoechea@math.ethz.ch}
\thanks{Bengoechea's research is supported by SNF grant 173976.}

\author{ \"O. Imamoglu}
\address{ ETH, Mathematics Dept.
\\CH-8092, Z\"urich, Switzerland}
\email{ ozlem@math.ethz.ch}
\thanks{Imamoglu's  research is supported by SNF grant 200021-185014.}
\maketitle

\begin{abstract} In 2008,  M. Kaneko made several interesting observations  about the values of the modular $j$ invariant at real quadratic irrationalities. The  values of modular functions at real quadratics are defined in terms of their cycle integrals along the associated geodesics. In this paper we  prove some of the conjectures of  M. Kaneko for a general modular function. 
\end{abstract}  

 \section{Introduction}
 
 Let $\G=\SL$, $\H$ be the upper complex half-plane  and $j(z)= \frac{1}{q}+ 744+ 196884q+\cdots$ be the classical Klein  modular invariant. The values of $j$ at imaginary quadratic irrationalities have a long and rich history going back to Kronecker and Weber and play an important role in the theory of complex muliplication. They have also seen considerable recent interest due to the beautiful results of Borcherds and Zagier, which relate their traces to the coefficients of half integral weight modular forms. 
 
  For a real quadratic irrationality $w\in \Q(\sqrt D)$, where $D$ is a positive (not necessarily fundamental) discriminant, 
  the ``value" $f(w)$ of a holomorphic modular function $f$ at $w$ has  been defined only recently in \cite{kaneko} and \cite{DIT} using their periods  
  along the closed geodesic associated to $w$. We define
  $$f(w):=\int_{C_w}f(z) ds,$$
where $C_w$ is the closed geodesic  associated to $w$  in $\Gamma\backslash \H$ with an orientation that will be defined  later in the preliminaries (in p.3), 
and  $ds$ is the hyperbolic arc length.
  
%
 Even though the   properties of these  values and potential arithmetic applications remain inaccessible, Kaneko \cite{kaneko} 
 studied the numerical values of $j(w)$ and made several remarkable observations. 
Among   his many observations, we note the following boundedness  conjecture. 
\begin{conjecture}[Kaneko] Let   $w\in\Q(\sqrt D)$ be  a real quadratic irrationality and 
$\varepsilon>1$ be the smallest unit with positive norm in $\Q(\sqrt D)$.  Then 
 $$\Re (j^{nor}(w)) \in [j^{nor}((1+\sqrt 5)/2), 744]\,\,\,\,\, \mbox{and}\,\,\,\,\,
\Im  (j^{nor}(w)) \in (-1,1),$$ where    $$j^{nor}(w):=\dfrac{1}{2\log \varepsilon} j(w).$$

\end{conjecture}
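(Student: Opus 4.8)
The plan is to exploit the fact that, since the hyperbolic length of $C_w$ equals $2\log\varepsilon$, the quantity $j^{nor}(w)$ is \emph{exactly} the arc-length average of $j$ along the closed geodesic, so that controlling $\Re$ and $\Im$ of $j^{nor}(w)$ amounts to controlling the average of $\Re j$ and $\Im j$ over $C_w$. A convenient first reduction is to write, for a primitive indefinite form $Q=[a,b,c]$ of discriminant $D$ with real point $w$, the pointwise identity $ds=\sqrt{D}\,\frac{dz}{Q(z,1)}$ valid along the geodesic, where $|Q(z,1)|=\sqrt{D}\,\Im z$; thus $ds$ agrees, up to the factor $\sqrt{D}$, with the holomorphic form $dz/Q(z,1)$, which is real and positive there. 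Splitting $j=744+g$ with $g(z)=q^{-1}+\sum_{n\ge 1}c(n)q^{n}$ and using $\langle 744\rangle=744$, the problem becomes the two statements $\Re\langle g\rangle\in[\,j^{nor}(\tfrac{1+\sqrt5}{2})-744,\,0\,]$ and $\Im\langle g\rangle\in(-1,1)$, where $\langle\,\cdot\,\rangle$ denotes the normalized integral $\frac{1}{2\log\varepsilon}\int_{C_w}(\cdot)\,ds$.

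Next I would decompose $C_w$ through the reduction theory of indefinite binary quadratic forms: the closed geodesic is the concatenation of finitely many arcs $\mathcal A_1,\dots,\mathcal A_k$, each the intersection with the standard fundamental domain $\mathcal F$ of a $\G$-translate of the geodesic, with $k$ governed by the period of the continued fraction of $w$ and $2\log\varepsilon=\sum_i\mathrm{length}(\mathcal A_i)$. On $\mathcal F$ one controls $g$ by its principal part $q^{-1}$ together with the tail $\sum_{n\ge1}c(n)q^{n}$. The crucial point is a cancellation: when an arc climbs high into the cusp it is, to leading order, a nearly horizontal segment sweeping close to a full horizontal period $x\in[-\tfrac12,\tfrac12]$ at almost constant height, and $\int_{-1/2}^{1/2}e^{\mp 2\pi i x}\,dx=0$ annihilates the exponentially large factor $e^{2\pi y}$. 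Hence each arc contributes $O(1)$ no matter how deep its excursion, and after dividing by $2\log\varepsilon$ the average stays bounded; this is the structural reason the conjecture can hold. The lower endpoint should come from the golden-ratio geodesic $w_0=\tfrac{1+\sqrt5}{2}$, which has shortest length and stays lowest in $\mathcal F$ (maximal height $\sqrt5/2$), so that $j$ is never amplified by the cusp and $\Re\langle g\rangle$ attains its most negative value there; this value I would evaluate directly.

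The technical heart, and the main obstacle, is to turn the cancellation heuristic into uniform, sharp estimates. Two difficulties stand out. First, the tail coefficients grow like $c(n)\sim e^{4\pi\sqrt n}$, so $\sum_{n\ge1}c(n)e^{-2\pi n y}$ must be handled carefully and uniformly over all arcs, including those whose lowest points sit near the bottom $\Im z=\sqrt3/2$ of $\mathcal F$ where the tail is not negligible; I would isolate $q^{-1}$ and estimate the remainder through its values on $\partial\mathcal F$ together with its monotone decay in $\Im z$. Second, the arcs are genuine circular segments, not horizontal lines, and their endpoints are not exactly at $x=\pm\tfrac12$, so the leading cancellation leaves boundary and curvature corrections that must be summed over all $k$ arcs and shown to total a number lying in the precise intervals $[\,j^{nor}(w_0)-744,\,0\,]$ and $(-1,1)$. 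Obtaining the exact constants rather than mere boundedness, and in particular proving that $w_0$ globally minimizes $\Re\langle g\rangle$ over all real quadratics, is an optimization over infinitely many geodesics and is where I expect the real work to lie; it may require a monotonicity or comparison argument showing that inserting any further excursion into the cusp can only raise $\Re\langle g\rangle$.
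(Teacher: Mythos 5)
The statement you are trying to prove is a \emph{conjecture}: the paper does not prove it, and to date it is open. What the paper actually establishes is strictly weaker: the upper bound $\Re(j^{nor}(w))\leq 744$ for all real quadratics, the lower bound only under the very strong hypothesis that every partial quotient in the period is at least $3e^{55}$, and nothing at all about the imaginary part beyond the single family $(\overline{N})$, where it vanishes by a symmetry of the kernel. So there is no ``paper's own proof'' for your attempt to match, and your proposal must be judged as a self-contained argument. As such it has genuine gaps, which you yourself flag: the cancellation heuristic in the cusp gives only that each excursion contributes $O(1)$, i.e.\ boundedness of the average, not the precise interval $[\,j^{nor}((1+\sqrt5)/2),744\,]$; the claim that the golden-ratio geodesic globally minimizes $\Re\langle g\rangle$ is asserted, not proved, and is essentially the hard content of the conjecture (note that the golden ratio is extremal for the Lagrange spectrum, which is why it is the natural candidate, but no monotonicity or comparison argument is supplied); and the bound $|\Im\langle g\rangle|<1$ is not addressed at all beyond the general cancellation remark. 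In short, the proposal is a plausible research plan, not a proof.

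It is worth contrasting your route with the one the paper takes for its partial results, because the difference is instructive. You decompose the closed geodesic into its intersections with the standard fundamental domain and fight the exponential growth of $j$ near the cusp arc by arc. The paper instead uses reduction theory in the opposite direction: Lemma 2.1 (after Zagier) unfolds the cycle integral onto the \emph{fixed} compact arc $\{e^{i\theta}:\pi/3\leq\theta\leq 2\pi/3\}$, writing $f(w)=\int_{\rho}^{\rho^{2}}f(z)K(z,w)\,dz$ with $K(z,w)$ a finite sum of elementary rational terms $\frac{1}{z-w_i}-\frac{1}{z-\tilde w_i}$ indexed by the continued fraction of $w$. All the analytic difficulty with $q^{-1}$ and the coefficients $c(n)\sim e^{4\pi\sqrt n}$ disappears, since $f$ is only ever evaluated on a compact arc where it is bounded; what remains is an entirely elementary comparison of the kernels $K(z,w)$ and $K(z,v)$ term by term (Theorem 3.1), from which the upper bound $744=c(0)$ follows by comparing $w$ with $(\overline{N})$ as $N\to\infty$, and the conditional lower bound by comparing with $(1+\sqrt5)/2$. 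If you want to salvage your approach, the place to concentrate is exactly the step you defer: a monotonicity statement saying that lengthening a cusp excursion (equivalently, increasing a partial quotient) can only increase $\Re(f^{nor})$. The paper proves such a comparison only when the partial quotients are inflated by the enormous factor $e^{55}$, which is why the lower bound remains conditional; an unconditional version of that comparison is what the conjecture still needs.
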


In this paper we look at the values $f(w)$    for any modular function $f$ which 
takes real values on the geodesic arc $\{e^{i\theta}\colon \pi/3\leq \theta\leq 2\pi/3\}.$    

We denote by  $(a_0,a_1,a_2,\ldots)$  the `--' continued fraction
$$
(a_0,a_1,a_2,\ldots)=a_0-\dfrac{1}{a_1-\dfrac{1}{a_2-\dfrac{1}{\ddots}}},
$$
with  $a_i\in\Z$ and $a_i \geq 2$ for $i\geq 1$.
As a special case of our results we prove

\begin{thm}\label{thm-intro2}Let $j(z)$ be the classical  modular invariant. Let $w$ be a real quadratic irrational and $(\overline{a_1,\ldots,a_n})$ be its period 
in the negative continued fraction expansion.
 Then  we have

\begin{enumerate}
\item $\Re(j^{nor}(w))\leq 744.$
\medskip

\item  If  all  the partial quotients $a_r$ in the period of $w$ satisfy $a_r\geq 3M$   with $M=e^{55}$  then
 $$ 
 \Re(j^{nor}(w))\geq j^{nor}((1+\sqrt{5})/2).
 $$

\end{enumerate}
 \end{thm}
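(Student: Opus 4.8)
The guiding principle is that $2\log\varepsilon$ is the hyperbolic length of $C_w$, so $j^{nor}(w)$ is the \emph{mean} of $j$ over the closed geodesic and both inequalities compare this mean to the constant term $744$. I would first cut the cycle integral along the period of the minus continued fraction: writing the primitive hyperbolic generator of the stabiliser of $w$ as $\gamma_w=T^{a_1}S\cdots T^{a_n}S$ with $T=\left(\begin{smallmatrix}1&1\\0&1\end{smallmatrix}\right)$, $S=\left(\begin{smallmatrix}0&-1\\1&0\end{smallmatrix}\right)$, and splitting a fundamental segment of the axis of $\gamma_w$ at the partial products $T^{a_1}S\cdots T^{a_r}S$, the $\G$-invariance of $j\,ds$ gives $\int_{C_w}j\,ds=\sum_{r=1}^n\int_{\gamma_r}j\,ds$. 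Here $\gamma_r$ is, up to $\G$, the arc on the semicircle through the conjugate pair of the $r$-th tail $w_r=(\overline{a_r,\ldots,a_n,a_1,\ldots})$ that performs one cuspidal excursion to height $\asymp a_r/2$; correspondingly $2\log\varepsilon=\sum_r\ell_r$ with $\ell_r\asymp 2\log a_r$.

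Next I would feed in the $q$-expansion $j=744+q^{-1}+\sum_{m\ge1}c(m)q^m$, whose coefficients are all \emph{positive} and which converges throughout $\H$, so that term-by-term integration over each compact arc $\gamma_r$ is legitimate. Evaluating $j$ at the fundamental-domain reduction of each point, the constant $744$ contributes $744\sum_r\ell_r=744\cdot2\log\varepsilon$ and hence reproduces the target value, while everything else is governed by the exponential integrals $I_m:=\Re\sum_{r=1}^n\int_{\gamma_r}e^{2\pi i m z}\,ds$ (with the reduction understood), for $m=-1$ and $m\ge1$. Concretely, setting $E(w):=\Re(j^{nor}(w))-744$, one has $E(w)=\dfrac{1}{2\log\varepsilon}\Big(I_{-1}+\sum_{m\ge1}c(m)I_m\Big)$, so part (1) would follow at once from $I_m\le0$ for every $m$ together with the positivity of the $c(m)$, and part (2) from a sufficiently strong bound on $|I_m|$.

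The whole difficulty is therefore concentrated in the sign and size of the $I_m$, and I expect this to be the main obstacle. On a single excursion, parametrising the semicircle by its angle gives $ds=d\theta/\sin\theta$ and $\Re(e^{2\pi i m z})=e^{-2\pi m R\sin\theta}\cos(2\pi m x)$ for $m\ge1$ (and $e^{+2\pi R\sin\theta}\cos(2\pi x)$ for $m=-1$), with $R\asymp a_r/2$: the terms $m\ge1$ decay away from the bottom and are harmless, whereas for $m=-1$ one meets a very sharp bump of width $\asymp R^{-1/2}$ about the apex against a cosine oscillating on the finer scale $R^{-1}$, producing $\asymp\sqrt R$ sign changes and heavy cancellation. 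I would make this rigorous through a stationary-phase analysis, ideally upgrading it to an exact evaluation of $\sum_r\int_{\gamma_r}e^{2\pi imz}ds$ in terms of Bessel functions and Kloosterman-type sums (in the spirit of the closed formulas for cycle integrals of Niebur–Poincaré series), from which both the nonpositivity $I_m\le0$ and a majorant of the shape $|I_m|\ll\sum_r a_r^{-1/2}\ell_r$ should be read off.

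Granting such bounds, part (2) follows by a soft argument. Because $(1+\sqrt5)/2=(2,\overline3)$ has bounded partial quotients, $\delta_0:=744-j^{nor}\big((1+\sqrt5)/2\big)$ is a fixed positive number. When every $a_r\ge 3M$ the contribution of the bounded portions of the arcs, of total length $O(n)$, is $O\big(n/\sum_r\log a_r\big)$, and the oscillatory contribution is $O\big(\sum_r a_r^{-1/2}\ell_r\big)/2\log\varepsilon$; both are $o(1)$ as the $a_r$ grow. The sole role of the explicit constant $M=e^{55}$ is to force these two error terms below $\delta_0$, giving $\Re(j^{nor}(w))=744+E(w)\ge 744-\delta_0=j^{nor}\big((1+\sqrt5)/2\big)$; its precise admissible value would be pinned down from the explicit constants in the estimate for $I_m$, a numerical lower bound for $\delta_0$, and an explicit bound on the bounded-part contribution.
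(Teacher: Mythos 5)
Your reduction to the Fourier modes $I_m$ concentrates the entire proof into two claims that you do not establish, and these are precisely where the difficulty lies. First, part (1) in your scheme rests on $I_m\le 0$ for \emph{every} $m$; this termwise nonpositivity is asserted, not proved, and there is no visible reason for it to hold for an individual $m$ and an individual geodesic (the sign of $\cos(2\pi m x)$ near the apex of the $r$-th excursion depends on the fractional part of the apex abscissa, which is not controlled). Second, and more seriously, the $m=-1$ mode is not a standard oscillatory integral: along an excursion of height $R\asymp a_r/2$ the integrand $e^{-2\pi i z}\,ds$ has modulus $e^{2\pi R\sin\theta}$, i.e.\ of size $e^{\pi a_r}$ near the apex, while the phase $-2\pi R\cos\theta$ has no interior stationary point. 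To reach $|I_{-1}|\ll\sum_r a_r^{-1/2}\ell_r$ you need cancellation by a factor of order $e^{\pi a_r}$, far beyond what the stationary-phase analysis you describe can deliver; and the exact Bessel--Kloosterman evaluations you invoke exist for traces over all classes of a given discriminant (as in Duke--Imamoglu--Toth), not for a single geodesic. So the heart of both parts is missing.

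The paper sidesteps this entirely. Its Lemma 2.1 transports the cycle integral to the fixed compact arc from $\rho$ to $\rho^2$ on the unit circle, where $j$ is bounded and real, and packages all dependence on $w$ into the rational kernel $K(z,w)=\sum_i\bigl(\tfrac{1}{z-w_i}-\tfrac{1}{z-\tilde w_i}\bigr)$ indexed by the minus continued fraction; the analysis is then elementary estimation of this kernel (Theorem 3.1), yielding a comparison theorem: if $b_r\ge e^{55}a_r$ for all $r$, then $\Re(f^{nor}(w))<\Re(f^{nor}(v))$. Part (1) follows by comparing $w$ with $(\overline{N})$ for large $N$ and using $\lim_{N\to\infty} j^{nor}((\overline{N}))=744$; part (2) by comparing $(1+\sqrt5)/2=(2,\overline{3})$ with $w$. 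In particular, the positivity of the coefficients of $j$ plays no role in the paper, and part (1) is not obtained as a termwise sign statement. If you want to pursue your route, the minimum missing ingredient is a rigorous proof of the exponential cancellation in $I_{-1}$; the kernel trick is exactly the device that makes such an estimate unnecessary.
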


    Theorem~\ref{thm-intro2} proves the upper bound conjectured by Kaneko for real quadratic irrationalities 
    while Theorem~\ref{thm-intro1} below,  our second main result,  
    shows that  in fact this bound is optimal. 
 
 \begin{thm}\label{thm-intro1}Let $j(z)$ be the classical  modular invariant. Let $w$ be a real quadratic irrationality and $(\overline{a_1,\ldots,a_n})$ be its period 
in the negative continued fraction expansion.
 Then 
 for  any positive integer $N>2$, the value $j^{nor}((\overline{N}))$ for the quadratic irrationality $w=(\overline{N})$ is real and
$$ \lim_{N\rightarrow\infty} j^{nor}((\overline{N}))=744.
$$
\end{thm}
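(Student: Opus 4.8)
The plan is to convert the cycle integral into a contour integral of a meromorphic differential and then exploit both the holomorphy of that differential on $\H$ and the oscillation of the singular part of $j$. Write $w=(\overline{N})=\tfrac{N+\sqrt{N^2-4}}{2}$, let $w'=\tfrac{N-\sqrt{N^2-4}}{2}$ be its conjugate, and set $Q(z)=z^2-Nz+1=(z-w)(z-w')$ and $D=N^2-4$. The geodesic $C_w$ is the semicircle with endpoints $w,w'$, its stabiliser in $\G$ is generated by the automorph $M=\left(\begin{smallmatrix}N&-1\\1&0\end{smallmatrix}\right)$, and the relevant unit is $\varepsilon=w$. Parametrising by $z=\tfrac N2+\tfrac{\sqrt D}{2}e^{i\phi}$ shows that $ds=\tfrac{\sqrt D\,dz}{Q(z)}$ along $C_w$, so that over one period (the arc from any basepoint $z_0$ to $Mz_0$)
\[
 j^{nor}(w)=\frac{\sqrt D}{2\log\varepsilon}\int_{z_0}^{Mz_0}\frac{j(z)}{Q(z)}\,dz .
\]
Since $j$ is holomorphic on $\H$ and $Q$ vanishes only at the real points $w,w'$, the form $\tfrac{j}{Q}\,dz$ is holomorphic on $\H$; as $M$ is an automorph of $Q$ and $j$ is $\G$-invariant, this form is $M$-invariant and the period is independent of $z_0$.

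For the reality statement I would use the reflection $\iota\colon z\mapsto N-\bar z$ in the symmetry axis $\Re z=N/2$ of the semicircle. It fixes $C_w$ setwise, satisfies $\iota M\iota=M^{-1}$, and, because the Fourier coefficients of $j$ are real and $N\in\Z$, gives $j(\iota z)=\overline{j(z)}$ and $Q(\iota z)=\overline{Q(z)}$. Conjugating the period integral and substituting $\zeta=\iota z$ then returns the same integral, so the period is real and hence $j^{nor}(w)\in\R$.

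For the limit I would separate the singular and constant terms of $j$. Writing $q=e^{2\pi i z}$ and $j(z)=q^{-1}+744+g(z)$ with $g(z)=\sum_{n\ge 1}c_nq^n\to0$ at the cusp, the constant term contributes exactly
\[
 \frac{\sqrt D}{2\log\varepsilon}\cdot 744\int_{z_0}^{Mz_0}\frac{dz}{Q(z)}=744,
\]
since $\int_{C_w}\tfrac{dz}{Q}=\tfrac1{\sqrt D}\int_{C_w}ds=\tfrac{2\log\varepsilon}{\sqrt D}$ is the hyperbolic length divided by $\sqrt D$. For the contribution of $g$ I would take $z_0=z_1$, the unique point where $C_w$ meets $|z|=1$ (at height $h_0=\sqrt{1-4/N^2}$, with $Mz_1=N-\bar z_1$), so that the chosen arc attains its minimal height $h_0$ at both endpoints. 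Using $|dz/Q|=ds/\sqrt D$ and the uniform bound $|g(z)|\le C\,e^{-2\pi\Im z}$ on the arc, one gets
\[
 \Big|\frac{\sqrt D}{2\log\varepsilon}\int_{z_1}^{Mz_1}\frac{g}{Q}\,dz\Big|\le\frac{1}{2\log\varepsilon}\int_{C_w}|g|\,ds=O\!\Big(\tfrac1{\log N}\Big),
\]
because $\int_{C_w}e^{-2\pi\Im z}\,ds=O(1)$, being dominated by the two ends of the arc where $\Im z\approx 1$.

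The main obstacle is the singular term $q^{-1}$, which is as large as $e^{\pi N}$ at the apex of $C_w$, so a naive absolute bound fails and genuine cancellation is required. Here I would use that $\tfrac{q^{-1}}{Q}\,dz$ is holomorphic on $\H$ to deform the arc from $z_1$ to $Mz_1$ down to the horizontal segment $\l_{h_0}=\{x+ih_0\}$ at its minimal height; the swept region stays in $\H$ and avoids the real poles $w,w'$, which lie just outside the interval $[\Re z_1,\Re Mz_1]$. On $\l_{h_0}$ one has $|q^{-1}|=e^{2\pi h_0}\le e^{2\pi}$ and $|Q(z)|\ge h_0^2$, and after the splitting $\tfrac1Q=\tfrac1{\sqrt D}\big(\tfrac1{z-w}-\tfrac1{z-w'}\big)$ a single integration by parts in $x$ exploits the oscillation of $e^{-2\pi i x}$ to replace the would-be $\log N$ growth of $\int_{\l_{h_0}}\tfrac{dx}{|x+ih_0-w|}$ by an $O(1)$ quantity. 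This yields $\int_{z_1}^{Mz_1}\tfrac{q^{-1}}{Q}\,dz=O(1/\sqrt D)$ and hence a contribution $O(1/\log N)$ after normalisation, so that $j^{nor}((\overline{N}))=744+O(1/\log N)\to744$. I expect the delicate point to be exactly this oscillatory estimate: without integrating by parts the segment integral is only $O(\log N)$, which after multiplication by $\sqrt D/2\log\varepsilon\sim N/2\log N$ degenerates to a useless $O(1)$, so the cancellation coming from the oscillation of $e^{-2\pi i x}$ is indispensable.
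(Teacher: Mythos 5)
Your argument is correct, but it follows a genuinely different route from the paper's. The paper (Theorem \ref{lim}, via Lemma \ref{lemma2}) first transports the cycle integral to the fixed arc from $\rho=e^{\pi i/3}$ to $\rho^2$ using the negative continued fraction algorithm, so that $j^{nor}((\overline{N}))=\frac{1}{2\log\varepsilon}\int_\rho^{\rho^2}j(z)K_N(z)\,dz$ with the explicit rational kernel $K_N(z)=\sum_{k=1}^{N-1}\bigl(\frac{1}{z-w_k}-\frac{1}{z+w_k}\bigr)$; reality then follows from the symmetry $K_N(e^{i(\pi-\theta)})=\overline{K_N(e^{i\theta})}$ combined with the reality of $j$ on the arc, and the limit from the pointwise convergence $(\cos\theta\,\Im K_N+\sin\theta\,\Re K_N)/(2\log\varepsilon)\to\sin\theta$, which reduces everything to $-\int_{\pi/3}^{2\pi/3}j(e^{i\theta})\sin\theta\,d\theta=c(0)=744$. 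You instead stay on the geodesic itself: reality comes from the Euclidean reflection $z\mapsto N-\bar z$ of the semicircle (with the base point $z_1$ on $|z|=1$ so that the reflection swaps the endpoints), and the limit from splitting $j=q^{-1}+744+g$, evaluating the constant term exactly, bounding $g$ by absolute values, and treating the principal part by deforming the arc down to the horizontal segment at height $h_0$ and integrating by parts to exploit the oscillation of $e^{-2\pi ix}$. You correctly isolate the crux: on the segment, absolute values give only $O(\log N/\sqrt D)$, which the normalisation $\sqrt D/(2\log\varepsilon)\sim N/(2\log N)$ turns into a useless $O(1)$, whereas one integration by parts (the boundary terms and the differentiated integral are $O(1/h_0)=O(1)$ since $|x+ih_0-w|\geq h_0$) yields $O(1/\sqrt D)$ and hence the needed $O(1/\log N)$. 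As to what each approach buys: the paper's kernel formalism avoids any analysis at the cusp, applies verbatim to any modular function real on the arc, and is the same machinery ($S_{w,v}$ of Theorem \ref{S}) that drives the comparison Theorem \ref{main-thm}; your argument is self-contained (no reduction theory), produces the explicit rate $j^{nor}((\overline{N}))=744+O(1/\log N)$, and makes transparent that $744$ is exactly the constant Fourier coefficient surviving the averaging.
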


\medskip

   In\cite{DIT}, the values $j(w)$ were studied  not individually but in `traces'. By analogy to the traces of the values of $j$ at CM points, 
  the authors in \cite{DIT} define $\mbox{Tr}_D j:=\sum j(w)$, where the sum is  over  classes of indefinite binary quadratic forms of 
  fundamental discriminant $D$. They  relate these traces to the coefficients of mock modular forms, generalizing the results of Borcherds and Zagier. 
It was  conjectured in \cite{DIT} and proved in \cite{DFI} and \cite{masri} that 
 \begin{equation}\label{tr} \frac{\mbox{Tr}_ D (j)}{\mbox{Tr}_D(1)}\rightarrow 720, \end{equation} 
 as fundamental discriminants $D\rightarrow\infty.$ 
Theorem~\ref{thm-intro1}, when combined with the limiting behavior of the traces in  equation (\ref{tr}),   
has the following corollary. \begin{coro}\label{coro-intro}

 There are infinitely many discriminants of the form $D=N^2-4$ with class number greater than one and only finitely many such discriminants with class number one. 
 
 \end{coro}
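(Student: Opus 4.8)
The plan is to play the limiting value $744$ from Theorem~\ref{thm-intro1} against the limiting average $720$ from \eqref{tr}. First I would record the arithmetic of the continued fraction: the purely periodic irrationality $w=(\overline N)$ satisfies $w=N-1/w$, hence is the larger root of $x^2-Nx+1=0$. Thus $w$ is attached to the binary quadratic form $x^2-Nxy+y^2=[1,-N,1]$, whose discriminant is $D=N^2-4$. For $N>2$ one has $(N-1)^2<N^2-4<N^2$, so $D$ is positive and not a perfect square, and $w$ is a genuine real quadratic irrational of discriminant $D=N^2-4$.

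Next comes the key reduction. Suppose $D=N^2-4$ is a fundamental discriminant with class number one. Then there is a single class of indefinite forms of discriminant $D$, represented by $w=(\overline N)$, so both traces collapse to one and the same cycle integral and the quotient becomes the normalized value at $w$:
\[
\frac{\mathrm{Tr}_D(j)}{\mathrm{Tr}_D(1)}=\frac{\int_{C_w} j(z)\,ds}{\int_{C_w} ds}=\frac{j(w)}{2\log\varepsilon}=j^{nor}(w)=j^{nor}((\overline N)),
\]
using that $\int_{C_w}ds$ is exactly the hyperbolic length $2\log\varepsilon$ of $C_w$ appearing in the normalization. Now I would argue by contradiction: if there were infinitely many $N$ with $D=N^2-4$ fundamental of class number one, then along this sequence $D\to\infty$, so \eqref{tr} forces $\mathrm{Tr}_D(j)/\mathrm{Tr}_D(1)\to 720$, while the display together with Theorem~\ref{thm-intro1} forces the very same quantity to tend to $744$. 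Since $720\neq 744$, this is impossible, and only finitely many fundamental $D=N^2-4$ can have class number one.

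For the complementary assertion I would exhibit an infinite supply of fundamental discriminants in the family: for $N$ odd one has $D\equiv 1\pmod 4$ with $\gcd(N-2,N+2)=1$, so $D=(N-2)(N+2)$ is fundamental precisely when both $N-2$ and $N+2$ are squarefree, which holds for a set of $N$ of positive density (a standard two-variable squarefree sieve). By the previous paragraph all but finitely many of these have class number greater than one, producing infinitely many discriminants $D=N^2-4$ with $h(D)>1$, and completing the dichotomy.

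I expect the main obstacle to be the bookkeeping around the single-class identity rather than the arithmetic heart, which is just the numerical mismatch $744\neq 720$. Concretely, one must confirm that $(\overline N)$ really does represent the unique class when $h(D)=1$, and that the normalizing quantity $2\log\varepsilon$ (the ``smallest unit of positive norm'' of the field) coincides with the automorph-length $\int_{C_w}ds$ of the associated geodesic, so that $\mathrm{Tr}_D(1)$ for a single class is exactly $2\log\varepsilon$. One must also keep track of the fact that the trace identity \eqref{tr} is available only for fundamental $D$, so the argument as given establishes the statement among fundamental discriminants of the form $N^2-4$, which is the natural range for this machinery.
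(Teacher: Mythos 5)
Your proof is correct and follows essentially the same route as the paper, which derives the corollary in one line by combining Theorem~\ref{thm-intro1} with the trace asymptotic \eqref{tr}: when $h(N^2-4)=1$ the trace quotient collapses to $j^{nor}((\overline{N}))$, and $720\neq 744$ gives the contradiction. Your additional bookkeeping --- checking that $(\overline{N})$ has discriminant $N^2-4$, that $\mathrm{Tr}_D(1)=2\log\varepsilon$ for a single class, and the squarefree sieve producing infinitely many fundamental discriminants in the family --- supplies details the paper leaves implicit, and your caveat that the argument lives among fundamental discriminants is a fair reading of what the paper actually establishes.
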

 
  Corollary~\ref{coro-intro},  was also observed in  the master thesis of  S. P\"apcke \cite{papcke}  where another proof of Theorem~\ref{thm-intro1} was also given.
 This corollary is in the spirit of the conjectures of Chowla and Yokoi. Chowla (see \cite{C-F})  conjectured that  for a positive integer $p$  
 the class number  $h(4p^2+1)$ is greater than one   if  $p>13$. Whereas 
 Yokoi's conjecture says that $h(p^2+4)>1 $  if $p>17$ (see \cite{yokoi}). These conjectures   were proved effectively by A. Bir\'o  in \cite{biro1}
 and \cite{biro2}, and some generalizations were proved by Bir\'o and Lapkova  in \cite{biro-lapkova}.     
 Even though our result, unlike the results  of Bir\'o and Bir\'o-Lapkova, is not effective,   as far as we know it 
 provides the first direct application of cycle integrals of modular functions to   a classical   arithmetic question.

    It is worth noting that Part (2) of  Theorem~\ref{thm-intro2} can be rephrased in terms of the diophantine properties of the quadratic irrationalities, 
    more precisely in terms of the Lagrange  spectrum.  For any irrational number $x$, let
  $\|x\|$ denote  the distance from   $x$
to a closest integer. Then recall that the Lagrange spectrum $L$ is defined as $$
L:=\left\{\nu(x)\right\}_{x\in\R}\subseteq\left[0,1/\sqrt{5}\right]\quad\mbox{with}\quad
\nu(x)=\liminf_{q\rightarrow\infty} q\|qx\|.
$$
 It is known that if $x$  has a continued fraction $(a_1,a_2,\ldots)$ then  $\nu(x) \leq\inf_{r\geq 1} a_r^{-1}$. 
 Hence  part (2) of   Theorem~\ref{thm-intro2}  proves the lower bound  conjectured by Kaneko for the quadratic irrationalities $w$ with constant of 
 approximation $\nu(w)\in [0, 1/3M]$. 
 
 For the values $j(w)$ with $\nu(w)\in\left(1/3,1/\sqrt{5}\right]$, Kaneko observed more specific phenomena that have been partially proved in
 \cite{BI}. Every such quadratic irrationality  is $\SL$-equivalent to a Markov quadratic, i.e.
 a quadratic of the form
 $$
\dfrac{-3c+2k+\sqrt{9c^2 - 4}}{2c},
 $$
 where $0\leq k< c$, $k\equiv ab^{-1}\pmod{c}$ and $(a,b,c)\in\mathbb{N}^3$ is a solution to the Diophantine equation
$$
a^2+b^2+c^2=3abc.
$$
 A very rich theory has been developed by Markov for this sort of quadratic irrationalities  that the authors exploit in \cite{BI}.

 After giving some preliminaries in  the next section, we will collect several technical  results in  Section \ref{technical} that will be  needed 
 in the sequel.   In Section~\ref{main}, we start by proving   Theorem~\ref{thm-intro1} for  a holomorphic modular function which takes 
 real values on the geodesic arc $\{e^{i\theta}: \pi/3\leq\theta\leq 2\pi/3\}$. The  results from Section \ref{technical}  are then used to prove 
 Theorem~ \ref{main-thm}, which compares the values of modular functions at different quadratic irrationalities by comparing their corresponding partial 
 quotients.   Theorem \ref{main-thm} is the main theorem  of this paper and  the results (1) and (2) in  Theorem \ref{thm-intro2}     follow as  its simple corollaries. 

{\it Acknowledgement.} We thank the referee for his/her many useful remarks which  greatly improved the exposition of the paper.

 \section{Preliminaries}

 Let $w$ be a real quadratic irrationality and $\tilde w$ be its conjugate. 
$w$ and $\tilde w$ are the roots of a quadratic  equation
$$
ax^2+bx+c=0\qquad (a,b,c\in\Z, \quad(a,b,c)=1)
$$ 
with discriminant $D=b^2-4ac>0$.
We
choose $a,b,c$ such that $w=\frac{-b+\sqrt{D}}{2a}$, $\tilde w=\frac{-b-\sqrt{D}}{2a}$. 
 The geodesic $S_w$ in $\H$ joining $w$ and $\tilde w$ is given by the equation
$$
a|z|^2+b\, \Re(z)+c=0\qquad (z\in\H).
$$
The stabilizer
$\Gamma_w$ of $w$   preserves the quadratic form $Q_w=[a,b,c]$, and hence $S_w$. 
Let  $A_w$ be the generator of the infinite cyclic group $\Gamma_w$ with 
$$A_w=\begin{pmatrix} \frac{1}{2}(t-bu) &-cu\\au &\frac{1}{2}(t+bu)\end{pmatrix},$$
where $(t,u)$ is the smallest positive solution to Pell's equation $t^2-Du^2=4$.  We denote by $\varepsilon=(t+u\sqrt D)/2$ the smallest unit with 
positive norm.

For any modular function $f$, since the group $\Gamma_w$   preserves the expression $f(z)Q_w(z,1)^{-1}dz,$   one can define the cycle integral of  
$f$ along $C_w=\Gamma_w\backslash S_w$, also viewed as the ``value" of $f$ at $w$, by the complex number
$$
f(w):=\int_{C_w} \dfrac{\sqrt{D}f(z)}{Q_w(z,1)}dz.
$$
The factor $\sqrt{D}$ is introduced here since   $ds=\dfrac{\sqrt{D}f(z)}{Q_w(z,1)}dz$ on the geodesic $C_w$.
The integral defining $f(w)$   is $\Gamma$-invariant and can in fact be taken along any path in $\H$  from $z_0$ to $A_w z_0$, 
where $z_0$ is any point  in $\H$.  
Note that this gives an orientation on  $S_w$ from $w$ to $\tilde w$,  which is  counterclockwise if $a>0$ and clockwise if $a<0$. 
We  normalize the number $f(w)$ by the length of the geodesic $C_w$ which is given by
$$
\int_{ C_w} \dfrac{\sqrt{D}}{Q_w(z,1)}dz=2\log \varepsilon
$$
  and we define the normalized value as 
$$
f^{nor}(w):=\dfrac{f(w)}{2\log \varepsilon}.
$$

For a real quadratic irrationality $w$ with purely periodic continued fraction, it is known that the continued fraction expansion is given by setting
$w_0=w$ and inductively $w_{i+1}=w_i-1$ if $w_i\geq 1$ and $w_{i+1}=-1/w_i$ if $0<w_i<1$. This algorithm is cyclic, which implies that 
 the hyperbolic element $A_w$   is a word in negative powers of $T$ and $S$, where  
 $T=\tiny{\begin{pmatrix}1 &1\\0 &1\end{pmatrix}}$, $S=\tiny{\begin{pmatrix}0 &1\\-1 &0\end{pmatrix}}$ 
 (for more detail we refer the reader to \cite{Zb} and \cite[Proposition 2.6]{katok}).
 It follows that the algorithm 
\begin{equation}\label{wi}
w_0=w-1,\qquad
 w_{i+1}=A_{i+1}(w_{i})\qquad (i\geq 0),
\end{equation}
where
$$
 A_{i+1}=\left\{\begin{array}{ll}
T^{-1} &\mbox{if $w_{i}\geq 1 $},\\
V^{-1}=\tiny{\begin{pmatrix} 1 &0\\-1 &1\end{pmatrix}} &\mbox{otherwise},
\end{array}\right.
$$
 is also cyclic, since $V^{-1}=T^{-1}ST^{-1}$, and hence comes back to $w_0=w-1$ after a finite number of steps.
 We denote by $\ell_w$ the length of the cycle, so that $A_{w-1}=A_{\ell_w}\cdots A_1$.
 The next lemma is crucial in our work and is inspired by Zagier's argument in the proof of Theorem 7 in \cite{KZ}.

\begin{lemma}\label{lemma2} Let $w$ be a real quadratic irrationality with a purely periodic continued fraction expansion  and  let
\begin{equation}\label{kernel}
K(z,w)=\sum_{i=0}^{\l_w-1} \dfrac{1}{z-w_{i}}-\dfrac{1}{z-\tilde w_{i}}.
\end{equation}
Then 
\begin{equation}\label{**}
f(w)= \int_{\rho}^{\rho^2} f(z) K(z,w) dz, 
\end{equation}
where $\rho=e^{\pi i/3}.$ Here $w_i$'s are defined in (\ref{wi}) and $\tilde{w_i}$ is the Galois conjugate of $w_i$.
\end{lemma}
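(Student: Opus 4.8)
The plan is to unfold the defining cycle integral along the continued-fraction algorithm (\ref{wi}), turning it into a single integral over the arc from $\rho$ to $\rho^2$; the whole argument hinges on the fact that every step map sends $\rho$ to $\rho^2$.

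First I would pass from $w$ to $w_0=w-1$. Since the cycle integral is $\G$-invariant we have $f(w)=f(w_0)$, and a partial-fraction splitting together with $w_0-\tilde w_0=\sqrt D/a$ gives $\sqrt D/Q_{w_0}(z,1)=\frac{1}{z-w_0}-\frac{1}{z-\tilde w_0}$. Writing $F_u(z)=f(z)\bigl(\tfrac{1}{z-u}-\tfrac{1}{z-\tilde u}\bigr)$, this realizes $f(w)=\int_{z_0}^{A_{w-1}z_0}F_{w_0}(z)\,dz$ along any path: the poles of $F_{w_0}$ lie on $\R=\partial\H$, so $F_{w_0}(z)\,dz$ is a holomorphic, hence closed, $1$-form on the simply connected domain $\H$, admits a global primitive $\Phi$, and its integrals depend only on the endpoints.

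Next I would prove the transformation law $F_u(\gamma z)\,d(\gamma z)=F_{\gamma^{-1}u}(z)\,dz$ for $\gamma\in\SL$. This follows from $f\circ\gamma=f$ and a short computation using $\det\gamma=1$, in which the Jacobian factor coming from $d(\gamma z)$ cancels the factors produced by the two linear denominators and recombines them into $F_{\gamma^{-1}u}$. Setting $B_i=A_i\cdots A_1$ (with $B_0=I$), so that $w_i=B_i w_0$, the substitution $z\mapsto B_i z$ then yields $\int_{\rho}^{\rho^2}F_{w_i}(z)\,dz=\int_{B_i^{-1}\rho}^{B_i^{-1}\rho^2}F_{w_0}(z)\,dz$. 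Summing over $i$ rewrites the right-hand side of (\ref{**}) as $\sum_{i=0}^{\l_w-1}\bigl[\Phi(B_i^{-1}\rho^2)-\Phi(B_i^{-1}\rho)\bigr]$.

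The crux, and the step I expect to be the main obstacle, is to telescope these $\l_w$ segments. The key geometric identities are $T^{-1}\rho=\rho^2$ and $V^{-1}\rho=\rho^2$; since each step map $A_{i+1}$ is one of these two, we get $A_{i+1}\rho=\rho^2$, equivalently $B_{i+1}^{-1}\rho^2=B_i^{-1}\rho$ for all $i$. Hence consecutive segments share an endpoint and the sum collapses to $\Phi(\rho^2)-\Phi(B_{\l_w}^{-1}\rho^2)=\Phi(\rho^2)-\Phi(A_{w-1}^{-1}\rho^2)$, using $B_{\l_w}=A_{w-1}$. Choosing the base point $z_0=A_{w-1}^{-1}\rho^2$ in the reduction step makes $f(w)$ equal exactly this expression, proving (\ref{**}). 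The points needing care are the two identities $A_{i+1}\rho=\rho^2$ (which are precisely what single out $\rho$ and $\rho^2$ as the endpoints of the arc), the orientation and index bookkeeping in the telescoping, and the verification that all the segments $B_i^{-1}[\rho,\rho^2]$ remain in the region where $F_{w_0}$ is holomorphic.
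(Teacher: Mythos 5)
Your proposal is correct and follows essentially the same route as the paper: unfold the defining cycle integral of $f(w)=f(w-1)$ along the algorithm \eqref{wi}, use the $\SL$-equivariance of $f(z)\bigl(\frac{1}{z-u}-\frac{1}{z-\tilde u}\bigr)dz$ to move each piece to the arc between $\rho$ and $\rho^2$, and invoke $T(\rho^2)=V(\rho^2)=\rho$ to make the endpoints match. The only difference is cosmetic — the paper cuts the path $[z_0,A_{w-1}^{-1}z_0]$ into segments and transports each to $[\rho^2,\rho]$, while you run the identical substitution in reverse and telescope — so the two arguments coincide.
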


\begin{proof}
 Let $A_0=\mathrm{Id}$ and $z_i=A_0^{-1}\ldots A_i^{-1}\rho^2$ for $i\geq 0$.  We have 
\begin{align*}
 f(w)= f(w-1)&= 
 -\sqrt{D}  \int_{z_{0}}^{A_{w-1}^{-1} z_{0}} \dfrac{f(z)}{Q_{w-1}(z,1)}\, dz\\
 &= -\sqrt{D} \sum_{i=0}^{\l_w-1} \int_{z_{i}}^{z_{i+1}} \dfrac{f(z)}{Q_{w-1}(z,1)}\, dz\\
&=-\sqrt{D} \sum_{i=0}^{\l_w-1} \int_{\rho^2}^{A_{i+1} \rho^2} \dfrac{f(z)}{(Q_{w-1}|A_0^{-1} \ldots A_i^{-1})(z,1)}\, dz,\\
&= -\sum_{i=0}^{\l_w-1}\int_{\rho^2}^{A_{i+1} \rho^2}f(z)\left(\dfrac{1}{z-w_{i}}-\dfrac{1}{z-\tilde w_{i}}\right) dz.
\end{align*}
 Since $V(\rho^2)=T(\rho^2)=\rho$, we have proved the lemma.
\end{proof}

Let $w$ be a quadratic irrationality with purely periodic continued fraction and $\overline{a_1,\ldots,a_n}$ be the period in its continued fraction expansion. 
Each $w_i$ from \eqref{wi} has a continued fraction expansion of the form
$$
w_{r,k}=(k,\overline{a_{r+1},\ldots,a_n,a_1,\ldots,a_r}),\\
$$
with $1\leq r\leq n$ and for each $r$, $1\leq k\leq a_{r}-1$. The Galois conjugate of $w_{r,k}$ is 
$$
\tilde w_{r,k}=-(a_{r}-k,\overline{a_{r-1},\ldots,a_1,a_n,\ldots,a_{r}}).
$$
Hence
$$
K(z,w)=\sum_{r=1}^n \sum_{k=1}^{a_r-1}\dfrac{1}{z-w_{r,k}} - \dfrac{1}{z-\tilde w_{r,k}} .
$$
 
If $r$ is fixed 
and there is no possible confusion, we will  drop the dependence on $r$ and write  $w_k:=w_{r,k}$ for simplicity.

 \section{Technical lemmas}\label{technical}

 Let $w=(\overline{a_1,\ldots,a_n})$ and $v=(\overline{b_1,\ldots,b_m})$ be two purely periodic quadratic irrationals 
with $n\geq m$. If $n>m$, then we define $b_{m+1}, b_{m+2},\ldots, b_n$ by cycling back to $b_1,b_2,etc$. 
For a fixed $r,$ let  
\begin{equation}\label{1}
S_{w,v}(z,r):=\sum_{k=1}^{a_r-1}\Big(\dfrac{1}{z-w_{r,k}} - \dfrac{1}{z-\tilde w_{r,k}}\Big) 
- \sum_{k=1}^{b_r-1}\Big(\dfrac{1}{z-v_{r,k}} - \dfrac{1}{z-\tilde v_{r,k}}\Big).
\end{equation}

\begin{theorem}\label{S}
Let $w=(\overline{a_1,\ldots,a_n})$ and $v=(\overline{b_1,\ldots,b_m})$ be two purely periodic quadratic irrationals. 
If 
$a_r\leq b_r$ for all $r=1,\ldots, n$, then  we have for $\theta\in[\frac{\pi}{3},\frac{2\pi}{3}]$
 \begin{equation}\label{eq1th1}
C_2(r)<\Re(S_{w,v}(e^{i\theta},r))<C_1(r)
 \end{equation}
 with 
 \begin{align*}
 C_1(r)&=12.6 + 
  \dfrac{7}{3} \Big(\log\Big(\dfrac{b_r-3+k_1}{a_r-3+k_1}\Big)-\dfrac{4}{a_r-3+k_1}-\dfrac{3}{(a_r-3+k_1)^2}\Big),\\
 C_2(r)&=-3.925
+\Big(1-\frac{2}{a_r}+\frac{1}{2a_r^2}\Big)
\Big(2\log\Big(\dfrac{b_r+1+k_0}{a_r+2+k_0}\Big)-\dfrac{5}{a_r+2+k_0}\Big),
 \end{align*}
 
 and 
 \begin{equation}\label{eq2th1}
 -13.02<\Im(S_{w,v}(e^{i\theta},r))<15.
\end{equation}

 \end{theorem}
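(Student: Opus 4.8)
The plan is to reduce everything to one–variable estimates for the real and imaginary parts of $\tfrac{1}{z-p}$ evaluated at $z=e^{i\theta}$, summed over the poles $p$. Write $z=e^{i\theta}=x+iy$, so that $x=\cos\theta\in[-\tfrac12,\tfrac12]$ and $y=\sin\theta\in[\tfrac{\sqrt3}{2},1]$ for $\theta\in[\tfrac\pi3,\tfrac{2\pi}3]$, and for real $p$ record the identities $\Re\tfrac{1}{z-p}=\tfrac{x-p}{(x-p)^2+y^2}$ and $\Im\tfrac{1}{z-p}=\tfrac{-y}{(x-p)^2+y^2}$. The first step is to locate the poles: since every partial quotient is $\ge 2$, every tail $(\overline{a_{r+1},\dots})$ exceeds $1$, whence $w_{r,k}=k-1/(\overline{a_{r+1},\dots})\in(k-1,k)$ and, from the formula for the conjugate, $\tilde w_{r,k}\in(-(a_r-k),-(a_r-k)+1)$; the analogous statements hold for $v$ with $b_r$ in place of $a_r$. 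Thus the positive poles lie in the unit intervals to the right of $0$ and the negative poles in the unit intervals to the left of $0$, which is exactly what makes a comparison of the sums with integrals possible.

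For the real part I would set $g_\theta(p)=\Re\tfrac{1}{z-p}$ and note that $g_\theta(p)\approx -1/p$ for large positive $p$ and $g_\theta(p)\approx 1/|p|$ for large negative $p$; hence $g_\theta(w_{r,k})-g_\theta(\tilde w_{r,k})\approx -1/k-1/(a_r-k)$, and summing over $k$ produces a harmonic sum $\approx -2\log a_r$. Subtracting the corresponding $v$-sum then yields the leading term $2\log(b_r/a_r)$ that one sees inside $C_1(r)$ and $C_2(r)$. To turn this heuristic into rigorous two-sided bounds I would use that $dg_\theta/dp=\big((x-p)^2-y^2\big)/\big((x-p)^2+y^2\big)^2$, so $g_\theta$ is monotone in $p$ once $|x-p|>y$; on this range $g_\theta(w_{r,k})$ is squeezed between $g_\theta(k-1)$ and $g_\theta(k)$, and the sum is compared with $\int g_\theta(p)\,dp=-\tfrac12\log\big((x-p)^2+y^2\big)$ via the integral test, which supplies the logarithms together with the first- and second-order Euler–Maclaurin corrections $\tfrac{c}{a_r}$ and $\tfrac{c'}{a_r^2}$ appearing in the constants. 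All estimates are to be taken uniformly over $x\in[-\tfrac12,\tfrac12]$ and $y\in[\tfrac{\sqrt3}{2},1]$.

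For the imaginary part I would use $h_\theta(p)=\Im\tfrac{1}{z-p}=-y/((x-p)^2+y^2)$, which is negative and of size $\approx -y/p^2$. Because the positive and negative poles are symmetric about $0$ up to unit shifts, the sums $\sum_k h_\theta(w_{r,k})$ and $\sum_k h_\theta(\tilde w_{r,k})$ are each absolutely bounded by a convergent tail of the form $y\sum 1/k^2$, uniformly in $a_r$ and $\theta$; consequently the imaginary part of each of the two inner sums in $S_{w,v}$ is $O(1)$, and so is their difference. Making these tail bounds explicit, in the worst case over $\theta$, gives the numerical constants in \eqref{eq2th1}.

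Finally I expect the genuine difficulty to lie entirely in the boundary terms and the numerical sharpness, not in the asymptotics. For small $k$ the pole $w_{r,k}\in(k-1,k)$ can come close to $z=e^{i\theta}$ (for instance near $k=1$ when $\theta$ is near $\pi/3$), and there $g_\theta$ is large and fails to be monotone, so these first few indices — and symmetrically the indices with $a_r-k$ small, which control the negative poles — must be isolated and estimated by hand; this is the source of the additive constants $12.6$ and $-3.925$ and of the index shifts $k_0,k_1$ at which the integral comparison is started. The remaining bulk indices are handled by the routine integral and Euler–Maclaurin comparison described above, and assembling the two contributions under the hypothesis $a_r\le b_r$ yields \eqref{eq1th1} and \eqref{eq2th1}.
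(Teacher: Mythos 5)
Your reading of the pole structure is correct ($w_{r,k}\in(k-1,k)$, $\tilde w_{r,k}\in(-(a_r-k),-(a_r-k)+1)$), and you have correctly located the source of the logarithm, but your decomposition is genuinely different from the paper's and, as written, it does not reach the statement. The paper does not estimate the $w$-sum and the $v$-sum separately and then subtract. It splits $S_{w,v}(z,r)=S_1-S_2-S_3$, where $S_1=\sum_{k=1}^{a_r-1}\bigl(\tfrac{1}{z-w_k}-\tfrac{1}{z-v_k}\bigr)$ cross-pairs each $w_{r,k}$ with $v_{r,k}$ (both lie in $(k-1,k)$ and their difference equals $w_0-v_0$, a single number of absolute value less than $1$, independent of $k$), $S_2$ similarly cross-pairs $\tilde w_{r,k}$ with $\tilde v_{r,b_r-a_r+k}$, and only $S_3=\sum_{k=a_r}^{b_r-1}\bigl(\tfrac{1}{z-v_k}-\tfrac{1}{z-\tilde v_{b_r-k}}\bigr)$ --- the $b_r-a_r$ poles of $v$ that have no partner in $w$, paired with their own conjugates --- carries the logarithm. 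The point of this pairing is that $S_1$ and $S_2$ are $O(k^{-2})$ termwise by cancellation, so they contribute only a small absolute constant, and every $a_r$-dependent term in $C_1(r),C_2(r)$ comes from the single sum $S_3$. Your route --- pair each pole with its own conjugate, estimate the $w$-block as $-2\log a_r+O(1)$ and the $v$-block as $-2\log b_r+O(1)$ by integral comparison, then subtract --- is viable in principle, but the two $O(1)$ windows are independent and add, so you roughly double the slack that the paper's matched-pair cancellation avoids.

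The genuine gap is that this theorem \emph{is} its constants: the numbers $12.6$ and $-3.925$, the coefficient $\tfrac{7}{3}$ versus $2$ on the two logarithms, the shifts $k_0,k_1$, the prefactor $1-\tfrac{2}{a_r}+\tfrac{1}{2a_r^2}$ and the $\tfrac{4}{a_r-3+k_1}$-type corrections are exactly what feed into Corollary \ref{cor} and the threshold $M=e^{55}$, and your proposal defers all of them to ``estimate by hand'' and ``make the tail bounds explicit.'' Carried out, your method would yield bounds of the same shape but with different (and, because of the doubled error window, likely weaker) constants, with no reason for them to imply the stated ones; so you have an outline of a theorem of the same type rather than a proof of this one. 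The one place where your cruder scheme does suffice is the imaginary part: the termwise bound $|\Im\tfrac{1}{e^{i\theta}-p}|\leq \sin\theta/((|x-p|)^2+\sin^2\theta)$ over the unit intervals sums to about $2.8$ for each of the four blocks, giving $|\Im(S_{w,v})|$ at most about $11$, which implies \eqref{eq2th1}. It is the real-part inequality \eqref{eq1th1}, with its precise correction terms, that requires either the paper's $S_1,S_2,S_3$ decomposition or an equally careful explicit substitute.
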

 
\bigskip

\noindent To prove   Theorem  \ref{S}, we start with  the following two  simple lemmas.

\bigskip
 
\begin{lemma}\label{lm1}

For $x,y\in\R$, $\theta\in[\frac{\pi}{3},\frac{2\pi}{3}]$, let
$$(x-y) F_{x,y}(\theta)=\Re\Big(\dfrac{1}{e^{i\theta}-x}-\dfrac{1}{e^{i\theta}-y}\Big).
$$

As a function of $\theta\in[\frac{\pi}{3},\frac{2\pi}{3}]$, $F_{x,y}(\theta)$ satisfies the following properties:

\begin{enumerate}[(i)]

\item\label{i} $F_{x,y}(\theta)$ is decreasing for $x,y\in (2,\infty)$ and increasing for $x,y\in(-\infty,-2)$.

\item\label{ii} For $0\leq |x|\leq 1$ and $0\leq |y|\leq 1$, we have
$$-1.4<F_{x,y}(\theta)<0.2.$$

\item\label{iii} For $1\leq |x|\leq 2$ and $1\leq |y|\leq 2$, we have 
$$-0.5<F_{x,y}(\theta)<0.2.$$

\item\label{iv} For $x\geq 2$ and $y\leq -2$, 
$$
 \dfrac{\min(xy -\frac{|x+y|}{2}-\frac{1}{2},xy-1)}{((-\frac{1}{2}+x)^2+\frac{3}{4})((\frac{1}{2}+y)^2+\frac{3}{4})}
 <F_{x,y} (\theta)< 
 \dfrac{xy+\frac{|x+y|}{2}-\frac{1}{2}}{((\frac{1}{2}+x)^2+\frac{3}{4})((-\frac{1}{2}+y)^2+\frac{3}{4})}.
$$

\end{enumerate}

\end{lemma}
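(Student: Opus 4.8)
My plan is to reduce all four statements to estimates for the single real function
\[
h(\xi,\theta):=\Re\frac{1}{(e^{i\theta}-\xi)^2}=\frac{u^2-v^2}{(u^2+v^2)^2},\qquad u=\cos\theta-\xi,\ \ v=\sin\theta,
\]
which is precisely $\partial_\xi\,\Re\frac{1}{e^{i\theta}-\xi}$. Indeed, writing $g(\xi,\theta)=\Re\frac{1}{e^{i\theta}-\xi}=\frac{\cos\theta-\xi}{1-2\xi\cos\theta+\xi^2}$, the defining relation $(x-y)F_{x,y}(\theta)=g(x,\theta)-g(y,\theta)$ together with the fundamental theorem of calculus gives the divided–difference representation
\[
F_{x,y}(\theta)=\int_0^1 h\big((1-t)y+tx,\theta\big)\,dt .
\]
For $\theta\in[\frac{\pi}{3},\frac{2\pi}{3}]$ the segment of integration lies in $[\min(x,y),\max(x,y)]$ and, crucially, $v^2=\sin^2\theta\in[\frac34,1]$; thus any pointwise bound on $h$ over the relevant range of $\xi$ passes to $F_{x,y}$ by averaging.

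For (i) I would differentiate in $c=\cos\theta$ (which decreases as $\theta$ increases on the interval). A direct computation collapses to the clean factorisation
\[
\frac{\partial h}{\partial c}(\xi,\theta)=\frac{2\,(\xi^3-3\xi+2c)}{(1-2\xi c+\xi^2)^3}.
\]
For $\xi>2$ one has $\xi^3-3\xi\ge2$ and $2c\ge-1$, so the bracket is positive: $h$ is increasing in $c$, hence decreasing in $\theta$. Averaging over a segment contained in $(2,\infty)$, which is exactly the situation when $x,y>2$, shows that $F_{x,y}$ is decreasing. The case $x,y<-2$ then follows at once from the symmetry $F_{x,y}(\theta)=F_{-x,-y}(\pi-\theta)$, which reverses the sense of monotonicity.

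For (ii) and (iii) I would bound $h$ pointwise. For fixed $v$, the map $s\mapsto\frac{s-v^2}{(s+v^2)^2}$ on $s=u^2\ge0$ rises from $-1/v^2$ at $s=0$ to its maximum $1/(8v^2)$ at $s=3v^2$ and then decays to $0$; inserting $v^2\ge\frac34$ gives the universal bounds $-\frac43\le h\le\frac16$, which already yield (ii) after integrating. For (iii) the hypothesis $1\le|\xi|\le2$ forces $|u|=|\xi-\cos\theta|\ge\frac12$, i.e. $s\ge\frac14$, whence $h\ge\frac{1/4-v^2}{(1/4+v^2)^2}\ge-\frac12$ as long as the whole segment stays in $|\xi|\ge1$. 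The remaining (opposite–sign) subcase, where the segment crosses $[-1,1]$, I would settle directly from the closed form $F=N/(PQ)$ by checking the polynomial inequality $N+\frac12 PQ\ge0$, the value $-\frac12$ being attained exactly at $(x,y)=(1,-1)$ (where $g(\pm1)=\mp\frac12$ is constant in $\theta$).

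Part (iv) is where the real work lies. Here I return to
\[
F_{x,y}(\theta)=\frac{N(c)}{P(c)Q(c)},\quad N(c)=2c^2-(x+y)c+xy-1,\ \ P=1-2xc+x^2,\ \ Q=1-2yc+y^2,
\]
with $c\in[-\frac12,\frac12]$. Since $N$ is an upward parabola, its maximum on $[-\frac12,\frac12]$ is attained at an endpoint and equals $xy+\frac{|x+y|}{2}-\frac12$, and one checks that this maximum is already negative throughout the region, so $N<0$; a lower estimate for $N$ is read off by sampling $c\in\{-\frac12,0,\frac12\}$, giving $\min(xy-\frac{|x+y|}{2}-\frac12,\,xy-1)$. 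Because $x\ge2$ and $y\le-2$, $P$ is decreasing and $Q$ increasing in $c$, so $(1-x+x^2)(1+y+y^2)\le PQ\le(1+x+x^2)(1-y+y^2)$, the two extreme products being exactly the denominators appearing in the statement. The point that needs care — and the main obstacle — is to combine these with the correct orientation: since $N<0$, the upper bound for $F$ must divide the largest (least negative) numerator by the largest value of $PQ$, whereas the lower bound divides the most negative numerator estimate by the smallest value of $PQ$; one then verifies that the slack in $PQ\ge(1-x+x^2)(1+y+y^2)$ absorbs the (harmless) over–estimation of $\min_c N$ by the three–point sample, so that the stated inequalities hold. Keeping this sign bookkeeping straight, rather than any individual estimate, is the crux of (iv).
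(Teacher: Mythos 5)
Your proposal is essentially correct and takes a genuinely different, and considerably more rigorous, route than the paper. The paper's proof simply records the closed form $F_{x,y}(\theta)=\frac{2c^2-(x+y)c+xy-1}{(1-2xc+x^2)(1-2yc+y^2)}$ with $c=\cos\theta$, declares (iv) ``straightforward'' from it, and disposes of (i)--(iii) by numerical verification (quoting computed extrema of $dF/d\theta$ for $x,y>2$). Your divided--difference representation $F_{x,y}(\theta)=\int_0^1 h((1-t)y+tx,\theta)\,dt$ with $h=\Re\,(e^{i\theta}-\xi)^{-2}$ replaces the two--parameter numerics by one--variable calculus: the factorisation $\partial_c h=2(\xi^3-3\xi+2c)/(1-2c\xi+\xi^2)^3$ gives (i) cleanly (I checked the algebra; it is right, and the symmetry $F_{x,y}(\theta)=F_{-x,-y}(\pi-\theta)$ handles the negative range), and the elementary analysis of $s\mapsto (s-v^2)/(s+v^2)^2$ gives the universal bounds $-4/3\le h\le 1/6$, which indeed yield (ii) and the upper bound in (iii). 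This is a real improvement in transparency over the paper. Two further points in your favour: you correctly observe that $F_{1,-1}(\theta)\equiv -1/2$, so the strict inequality in (iii) actually degenerates to equality at that corner (harmless for the paper's application, where the arguments are quadratic irrationals, but a genuine blemish in the statement); and you correctly identify that the lower bound in (iv) is \emph{not} obtained by the naive scheme $\min_c N/\min_c(PQ)$, since for $0<|x+y|<2$ the vertex value $N((x+y)/4)=xy-1-(x+y)^2/8$ lies strictly below $\min(xy-|x+y|/2-1/2,\,xy-1)$. The paper glosses over this entirely.

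Two items remain asserted rather than proved, and you should be aware that they carry the residual burden: the polynomial inequality $N+\tfrac12 PQ\ge0$ for the mixed--sign subcase of (iii), and the claim in (iv) that the slack in $PQ\ge(1-x+x^2)(1+y+y^2)$ (which is loose precisely because $\min P$ and $\min Q$ are attained at opposite endpoints $c=\pm\tfrac12$) absorbs the deficit $\tfrac{(|x+y|-2)^2}{8}\le\tfrac18$ between the three--point sample and the true minimum of $N$. Both are true and finite to check --- for the latter, the needed inequality reduces to $\frac{P(c^*)Q(c^*)-\min P\,\min Q}{\min P\,\min Q}\ge\frac{(x+y)^2/8}{|xy-1|}$ at the vertex $c^*$, and since $P(c^*)-\min P=x(2-x-y)/2\ge x/2$ and $Q(c^*)-\min Q=|y|(2+x+y)/2\ge|y|/2$ when $|x+y|<2$, the left side is at least of order $1/x+1/|y|$ while the right side is at most $1/(8x|y|)$ --- but as written your (iv) is a sketch at exactly its crux. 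Given that the paper offers even less at this point, this is a fair trade, but a complete write--up should include these two verifications.
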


\begin{proof} 
A simple calculation gives
\begin{equation}\label{F1}
F_{x,y}(\theta)=\dfrac{\cos^2\theta-(x+y)\cos\theta+xy-\sin^2\theta}{((\cos\theta-x)^2+\sin^2\theta)((\cos\theta-y)^2+\sin^2\theta)}.
\end{equation}

The assertion (iv) is straightforward using \eqref{F1}. The other three  assertions can be easily verified  numerically. For example,  for $x>2$ and $y>2$   the maximum of the derivative $\frac{dF}{d\theta}$ is $-0.00504971$ where as its minimum is  $-0.19245$. This shows that   for $x>2$ and $y>2,$ $\frac{dF}{d\theta}<0$ and hence $F_{x,y}(\theta)$ is decreasing.

\end{proof}

Similarly we have:

\begin{lemma}\label{lm2} 
For $x,y\in\R$, $\theta\in[\frac{\pi}{3},\frac{2\pi}{3}]$, let
$$(x-y) G_{x,y}(\theta)=\Im\Big(\dfrac{1}{e^{i\theta}-x}-\dfrac{1}{e^{i\theta}-y}\Big).
$$

As a function of $\theta\in[\frac{\pi}{3},\frac{2\pi}{3}]$, $G_{x,y}(\theta)$ satisfies the following 
properties:
\begin{enumerate}[(i)]
\item\label{2i} $G_{x,y}(\theta)$ is decreasing for $x,y\in (1,\infty)$ and for $x,y\in(-\infty,-1)$.

\item\label{2ii} $G_{x,y}(\theta)$ is increasing for $x\in(1,\infty) $ and $y\in(-\infty,-1)$.

 \item\label{2iii} For $0\leq |x|\leq 1$ and $0\leq |y|\leq 1$, we have
$$ G_{0,0}(e^{\pi i/3})\leq G_{x,y}(\theta) \leq G_{0,0}(e^{2\pi i/3}),$$
with $G_{0,0}(e^{\pi i/3})=-\frac{\sqrt{3}}{2}$ and $G_{0,0}(e^{2\pi i/3})=\frac{\sqrt{3}}{2}$.
\end{enumerate}

\end{lemma}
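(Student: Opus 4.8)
The plan is to follow the same route as for Lemma~\ref{lm1}: reduce all three assertions to a single explicit rational expression for $G_{x,y}(\theta)$, the exact analogue of \eqref{F1}. Writing $\frac{1}{e^{i\theta}-x}=\frac{(\cos\theta-x)-i\sin\theta}{(\cos\theta-x)^2+\sin^2\theta}$ and taking imaginary parts, the common factor $(x-y)$ cancels once one uses $(\cos\theta-x)^2+\sin^2\theta=1-2x\cos\theta+x^2$, leaving
\[
G_{x,y}(\theta)=\frac{\sin\theta\,\big((x+y)-2\cos\theta\big)}{\big((\cos\theta-x)^2+\sin^2\theta\big)\big((\cos\theta-y)^2+\sin^2\theta\big)}.
\]
In particular $G_{0,0}(\theta)=-2\sin\theta\cos\theta=-\sin 2\theta$, and since $\frac{d}{d\theta}(-\sin 2\theta)=-2\cos 2\theta\geq 1>0$ on $[\frac{\pi}{3},\frac{2\pi}{3}]$, this function is strictly increasing there, with minimum $-\frac{\sqrt3}{2}$ at $\theta=\frac{\pi}{3}$ and maximum $\frac{\sqrt3}{2}$ at $\theta=\frac{2\pi}{3}$. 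This already identifies the two constants quoted in (iii) as the extreme values of $G_{0,0}$.

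For the monotonicity statements (i) and (ii) I would differentiate the closed form: with $N=\sin\theta\,((x+y)-2\cos\theta)$ and $W=\big((\cos\theta-x)^2+\sin^2\theta\big)\big((\cos\theta-y)^2+\sin^2\theta\big)>0$, the sign of $\frac{dG}{d\theta}$ equals that of $N'W-NW'$, which expands to a polynomial in $\cos\theta,\sin\theta,x,y$; the task is to show it is sign-definite on each parameter region. Two simplifications cut the work. First, the identity $G_{-x,-y}(\theta)=-G_{x,y}(\pi-\theta)$, immediate from the closed form, together with $G_{x,y}=G_{y,x}$, folds the second region of (i), $x,y\in(-\infty,-1)$, into the first, $x,y\in(1,\infty)$, and shows the mixed region of (ii) is invariant. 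Second, since these regions are \emph{unbounded}, I would compactify via $t=1-1/x\in[0,1)$ (and its reflection $s=1+1/y$ for the negative range), under which $x\to\infty$ corresponds to $t\to 1$ and $G$ together with its $\theta$-derivative extends continuously by $0$; the sign of $N'W-NW'$ can then be verified on the compact box $[0,1]^2\times[\frac{\pi}{3},\frac{2\pi}{3}]$ exactly as the derivative bounds were checked in Lemma~\ref{lm1}. The sign comes out negative for (i) (decreasing) and positive for (ii) (increasing).

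For (iii) I would work directly on the compact box $[-1,1]^2\times[\frac{\pi}{3},\frac{2\pi}{3}]$ and show that the global extrema of the explicit rational function $G_{x,y}(\theta)$ are attained at $x=y=0$. Inspecting $\partial_x G$ and $\partial_y G$ shows that $x=y=0$ is the only interior critical point in the $(x,y)$-variables and that the boundary $|x|=1$ or $|y|=1$ yields smaller values, so the optimization reduces to $G_{0,0}(\theta)=-\sin 2\theta$, whose range over $[\frac{\pi}{3},\frac{2\pi}{3}]$ is precisely $\big[-\frac{\sqrt3}{2},\frac{\sqrt3}{2}\big]$; this is the desired two-sided bound.

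The main obstacle is the uniform control of the sign of $\frac{dG}{d\theta}$ over the \emph{unbounded} regions in (i) and (ii): unlike the compact boxes of Lemma~\ref{lm1}, one cannot simply evaluate the derivative on a finite grid. I expect the compactifying substitution above (reducing to a closed box on which $G$ and its derivative extend smoothly, with limiting values $0$ at infinity) to resolve this cleanly, after which the remaining verifications are routine calculus and numerics parallel to the proof of Lemma~\ref{lm1}.
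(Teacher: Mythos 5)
Your closed form for $G_{x,y}(\theta)$ is exactly the paper's \eqref{G1}, and for parts (i) and (ii) your plan (differentiate, use the symmetry $G_{-x,-y}(\theta)=-G_{x,y}(\pi-\theta)$ together with $G_{x,y}=G_{y,x}$ to fold the regions together, then compactify via $t=1-1/x$ and check the sign of $N'W-NW'$ on a closed box) is a legitimate and in fact more careful version of what the paper does: the paper simply asserts that the lemma ``follows in a similar way to Lemma~\ref{lm1}'', i.e.\ by numerical verification of the sign of $dG/d\theta$. Up to that point the two arguments are essentially the same.

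Part (iii), however, rests on a false step. The point $x=y=0$ is \emph{not} a critical point of $(x,y)\mapsto G_{x,y}(\theta)$ for fixed $\theta\in(\pi/3,2\pi/3)$: from \eqref{G1} one computes $\partial_x G_{x,y}(\theta)\big|_{(0,0)}=\sin\theta\,(1-4\cos^2\theta)$, which is strictly positive there (it equals $1$ at $\theta=\pi/2$). Writing $p=x-\cos\theta$, $q=y-\cos\theta$, $m=\sin\theta$, one has $G=\frac{m(p+q)}{(p^2+m^2)(q^2+m^2)}$ and $\partial_pG=\frac{m(m^2-p^2-2pq)}{(p^2+m^2)^2(q^2+m^2)}$; the only critical points in $(x,y)$ are $p=q=\pm m/\sqrt3$, i.e.\ $x=y=\cos\theta\pm\frac{\sin\theta}{\sqrt3}$, with values $\pm\frac{3\sqrt3}{8\sin^2\theta}$. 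In particular $(1/\sqrt3,1/\sqrt3,\pi/2)$ is an interior critical point of the three-variable function with value $3\sqrt3/8$, and the claim that the faces $|x|=1$, $|y|=1$ give smaller values is also false, since $G_{1,1}(\pi/3)=\tfrac{\sqrt3}{2}$ and $G_{-1,-1}(2\pi/3)=-\tfrac{\sqrt3}{2}$ attain the extremes. So the proposed reduction ``only interior critical point is $(0,0)$, boundary is smaller, hence reduce to $G_{0,0}(\theta)$'' does not survive the computation. The conclusion of (iii) is nevertheless true and the repair is short: since $G\to0$ as $|(p,q)|\to\infty$ and the critical values are $\pm\frac{3\sqrt3}{8\sin^2\theta}$, one gets $|G_{x,y}(\theta)|\le\frac{3\sqrt3}{8\sin^2\theta}\le\frac{\sqrt3}{2}$ for \emph{all} real $x,y$ (using $\sin^2\theta\ge 3/4$ on $[\pi/3,2\pi/3]$), with equality exactly at $(0,0,2\pi/3)$ and $(1,1,\pi/3)$ for the maximum and at $(0,0,\pi/3)$ and $(-1,-1,2\pi/3)$ for the minimum; substitute this argument (or the paper's direct numerical check over the compact box) for the one you sketched.
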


\begin{proof}
 Once again a simple calculation gives 
 \begin{equation}\label{G1}
G_{x,y}(\theta)=\dfrac{\sin\theta(x+y-2\cos\theta)}{((\cos\theta-x)^2+\sin^2\theta)((\cos\theta-y)^2+\sin^2\theta)}
 \end{equation}
and the proof of the lemma follows in a similar way to Lemma \ref{lm1}.
\end{proof}

\subsection{Proof of Theorem \ref{S}}

 We start by grouping  some of the terms from $S_{w,v}$ into two sums $S_1$ and $S_2$ 
($S_1$ corresponding to  terms $w_k,v_k$ and $S_2$ to   conjugates $\tilde w_k, \tilde v_k$)  
whose  contribution, as we will see, will be   minor. Since $r$ is fixed in the whole proof, we drop the dependence on $r$ 
in the notation for all these sums.   Define
$$
S_1(z):=\sum_{k=1}^{a_r-1}\dfrac{1}{z-w_k} -\dfrac{1}{z-v_k},
$$
$$
S_2(z):=\sum_{k=1}^{a_r-1} \dfrac{1}{z-\tilde w_k} - \dfrac{1}{z-\tilde v_{b_r-a_r+k}}.
$$
We group the remaining terms from $S_{w,v}$ in the sum $S_3$, which will be the major contribution:
$$
S_3(z)= \sum_{k=a_r}^{b_r-1} \dfrac{1}{z-v_k}-\dfrac{1}{z- \tilde v_{b_r-k}}.
$$
Hence, 
\begin{equation}\label{eq0}
S_{w,v}(z,r) = S_1(z)-S_2(z)-S_3(z).
\end{equation}

\subsubsection{Proof of inequality \eqref{eq1th1}.} 
The real part of $S_1(e^{i\theta})$ satisfies
$$
\Re(S_1(e^{i\theta}))=(w_0-v_0) \sum_{k=1}^{a_r-1} F_{w_k,v_k}(\theta).
$$
For $3\leq k\leq a_r-1$, since $\floor{w_k}=\floor{v_k}=k-1>2,$  by Lemma \ref{lm1} \eqref{i},
\begin{align*}
F_{w_k,v_k}(\theta)< F_{w_k,v_k}(\pi/3)
< \dfrac{k^2-k+\frac{1}{2}}{(k^2-3k+3)^2}
\end{align*}
and
$$
F_{w_k,v_k}(\theta)> F_{w_k,v_k}(2\pi/3)
> \dfrac{k^2-k-\frac{1}{2}}{(k^2+k+1)^2}.
$$
Since $-1<w_0-v_0<0$ and using Lemma \ref{lm1} \eqref{ii}-\eqref{iii}, we have
\begin{equation}\label{rin1.1}
\Re(S_1(e^{i\theta}))<(w_0-v_0) \Big(-1.9+\sum_{k=3}^{a_r-1} \dfrac{k^2-k-\frac{1}{2}}{(k^2+k+1)^2}\Big) < 1.683
\end{equation}
and similarly
\begin{equation}\label{rin1.2}
\Re(S_1(e^{i\theta}))>-0.4- \sum_{k=3}^{a_r-1} \dfrac{k^2-k+\frac{1}{2}}{(k^2-3k+3)^2} > -1.663.
\end{equation}
The real part of $S_2(e^{i\theta})$ satisfies
$$
\Re(S_2(e^{i\theta}))=(\tilde w_{a_r}-\tilde v_{b_r})\sum_{k=1}^{a_r-1}F_{\tilde w_k,\tilde v_{b_r-a_r+k}}(\theta).
$$
For $1\leq k\leq a_r-3$, $\floor{\tilde w_k}=\floor{\tilde v_{b_r-a_r+k}}=-a_r+k<-2$ and hence by Lemma \ref{lm1} \eqref{i}, we have
$$ 
F_{\tilde w_k,\tilde v_{b_r-a_r+k}}(\theta) < F_{\tilde w_k,\tilde v_{b_r-a_r+k}}(2\pi/3)
<\dfrac{(a_r-k)^2-(a_r-k)+\frac{1}{2}}{((a_r-k)^2-3(a_r-k)+3)^2}
$$
and
$$
F_{\tilde w_k,\tilde v_{b_r-a_r+k}}(\theta) > F_{\tilde w_k,\tilde v_{b_r-a_r+k}}(\pi/3)
> \dfrac{(a_r-k)^2-(a_r-k)-\frac{1}{2}}{((a_r-k)^2+(a_r-k)+1)^2}.
$$
Since $\floor{\tilde w_{a_r}}=\floor{\tilde v_{b_r}} =0$ and $b_r\geq a_r$, $  0<\tilde w_{a_r}-\tilde v_{b_r}<1$ and using Lemma \ref{lm1} \eqref{ii}-\eqref{iii},
\begin{align}
\Re(S_2(e^{i\theta})) &< 
0.4+\sum_{k=1}^{a_r-3}\dfrac{(a_r-k)^2-(a_r-k)+\frac{1}{2}}{((a_r-k)^2-3(a_r-k)+3)^2}
\nonumber
\\&< 0.4+\sum_{k=3}^{a_r-1} \dfrac{k^2-k+\frac{1}{2}}{(k^2-3k+3)^2} < 1.663 \label{rin2.1}
\end{align}
and
\begin{equation}\label{rin2.2}
 \Re(S_2(e^{i\theta})) > (\tilde w_{a_r}-\tilde v_{b_r}) \Big(-1.9+\sum_{k=1}^{a_r-3} \dfrac{(a_r-k)^2
 -(a_r-k)-\frac{1}{2}}{((a_r-k)^2+(a_r-k)+1)^2}\Big) >  -1.9.
 \end{equation}
The real part of $S_3(e^{i\theta})$ satisfies
$$
\Re(S_3(e^{i\theta}))=  \sum_{k=a_r}^{b_r-1} (2k+v_0-\tilde v_{b_r}) F_{v_k,\tilde v_{b_r-k}}(\theta).
$$
Since $-\floor{\tilde v_{b_r-k}}=\floor{v_k}=k,$ for $k\geq 3$,    by Lemma \ref{lm1} \eqref{iv},
$$
\dfrac{-k^2-\frac{3}{2}}{(k^2-3k+3)^2}
< F_{v_k,\tilde v_{b_r-k}}(\theta) < 
 \dfrac{-k^2+2k-\frac{1}{2}}{(k^2+k+1)^2}.
$$
 Therefore, similarly to above,
 \begin{align}
\Re(S_3(e^{i\theta}))&<0.6+ \sum_{k=a_r+k_0}^{b_r-1+k_0} (2k-1)
\dfrac{-k^2+2k-\frac{1}{2}}{(k^2+k+1)^2}\nonumber
\\
&<0.6
 -\sum_{k=a_r+2+k_0}^{b_r+1+k_0} (2k-5) \dfrac{1-\frac{2}{a_r}+\frac{1}{a_r^2}}{k^2}\nonumber\\
 &< 0.6-\Big(1-\frac{2}{a_r}+\frac{1}{2a_r^2}\Big)
 \Big(2\log\Big(\dfrac{b_r+1+k_0}{a_r+2+k_0}\Big)-\dfrac{5}{a_r+2+k_0}\Big)\label{rin3.1}
\end{align}
with $k_0=1$  if $a_r=2$ or $k_0=0$ otherwise. 
We also have
\begin{align}
\Re(S_3(e^{i\theta}))&>-2+ \sum_{k=a_r+k_1}^{b_r-1+k_1}
\dfrac{2k(-k^2-\frac{3}{2})}{(k^2-3k+3)^2}\nonumber\\
&>-9
-\dfrac{7}{3} \sum_{k=a_r-3+k_1}^{b_r-3+k_1} \dfrac{k+3}{k^2}\nonumber\\
&>-9-\dfrac{7}{3}\Big(\log\Big(\dfrac{b_r-3+k_1}{a_r-3+k_1}\Big)-\dfrac{4}{a_r-3+k_1}-\dfrac{3}{(a_r-3+k_1)^2}\Big)
\label{rin3.2}
\end{align}
with 
$$
k_1=\left\{\begin{array}{ll}
           2  &\quad\mbox{if $a_r=2$}\\
           1  &\quad\mbox{if $a_r=3$}\\
           0  &\quad\mbox{if $a_r\geq 4$}.
            \end{array}\right.
            $$
           By \eqref{eq0}, \eqref{rin1.1}, \eqref{rin2.2} and \eqref{rin3.2} we have that
\begin{equation}\label{re-ub}
\Re(S_{w,v}(e^{i\theta}),r)< 
12.6 + 
  \dfrac{7}{3} \Big(\log\Big(\dfrac{b_r-3+k_1}{a_r-3+k_1}\Big)-\dfrac{4}{a_r-3+k_1}-\dfrac{3}{(a_r-3+k_1)^2}\Big).
          \end{equation}
          By \eqref{rin1.2}, \eqref{rin2.1} and \eqref{rin3.1}, 
\begin{equation}\label{re-lb}\Re(S_{w,v}(e^{i\theta}),r)>- 3.926
+\Big(1-\frac{2}{a_r}+\frac{1}{2a_r^2}\Big)
\Big(2\log\Big(\dfrac{b_r+1+k_0}{a_r+2+k_0}\Big)-\dfrac{5}{a_r+2+k_0}\Big)
\end{equation}
           
\subsubsection{Proof of the inequality \eqref{eq2th1}}
The imaginary part of $S_1(e^{i\theta})$ satisfies
$$
\Im(S_1(e^{i\theta}))=(w_0-v_0) \sum_{k=1}^{a_r-1} G_{w_k,v_k}(\theta).
$$
For $k\geq 2$, by Lemma \ref{lm2} \eqref{2i},
$$
G_{w_k,v_k}(\theta)<G_{w_k,v_k}(e^{\pi i/3})<\dfrac{\frac{\sqrt{3}}{2}(2k-1)}{(k^2-3k+3)^2}
$$
and 
$$
G_{w_k,v_k}(\theta)>G_{w_k,v_k}(e^{2\pi i/3})>\dfrac{\frac{\sqrt{3}}{2}(2k-1)}{(k^2+k+1)}.
$$
Since $-1<w_0-v_0<0$ and using Lemma \ref{lm2} \eqref{2iii}, we have
\begin{equation}\label{iin1.1}
-4.209 <\Im(S_1(e^{i\theta}))<0.9.
\end{equation}
The imaginary part of $S_2(e^{i\theta})$ satisfies
$$
\Im(S_2(e^{i\theta}))=(\tilde w_{a_r}-\tilde v_{b_r})\sum_{k=1}^{a_r-1}G_{\tilde w_k,\tilde v_{b_r-a_r+k}}(\theta).
$$
For $1\leq k\leq a_r-2$, by Lemma \ref{lm2} \eqref{2i},
$$ 
G_{\tilde w_k,\tilde v_{b_r-a_r+k}}(\theta) < G_{\tilde w_k,\tilde v_{b_r-a_r+k}}(\pi/3)
<\dfrac{\sqrt{3}(k-a_r+\frac{1}{2})}{((a_r-k)^2+a_r-k+1)^2}
$$
and
$$
G_{\tilde w_k,\tilde v_{b_r-a_r+k}}(\theta) > G_{\tilde w_k,\tilde v_{b_r-a_r+k}}(2\pi/3)
> \dfrac{\sqrt{3}(k-a_r+\frac{1}{2})}{((a_r-k)^2-3(a_r-k)+3)^2}.
$$
Since $0<\tilde w_{a_r}-\tilde v_{b_r}<1$ and using Lemma \ref{lm2} \eqref{iii}, we have
\begin{equation}\label{iin2.1}
-6.187 <\Im(S_2(e^{i\theta}))<0.9.
\end{equation}
The imaginary part of $S_3(e^{i\theta})$ satisfies
$$
\Im(S_3(e^{i\theta}))=  \sum_{k=a_r}^{b_r-1} (2k+v_0-\tilde v_{b_r}) G_{v_k,\tilde v_{b_r-k}}(\theta).
$$
For $k\geq 2$, by Lemma \ref{lm2} \eqref{2ii}, 
$$
G_{v_k,\tilde v_{b_r-k}}(\theta)<G_{v_k,\tilde v_{b_r-k}}(2\pi/3)<\dfrac{\sqrt{3}}{(k^2-k+1)(k^2-3k+3)}
$$
 and 
 $$
 G_{v_k,\tilde v_{b_r-k}}(\theta)>G_{v_k,\tilde v_{b_r-k}}(\pi/3)>-\dfrac{\sqrt{3}}{(k^2-3k+3)(k^2-k+1)}.
 $$
 Therefore, similarly to previous cases we get
 \begin{equation} \label{iin3.1}
-7.905 < \Im(S_3(e^{i\theta}))< 7.905.
\end{equation}
 By \eqref{eq0}, \eqref{iin1.1}, \eqref{iin2.1} and \eqref{iin3.1}, we have
 \begin{equation}\label{im-ub} 
 -13.015< \Im(S_{w,v}(e^{i\theta},r))< 15.
\end{equation}

  \medskip
  
  The following corollary of Theorem \ref{S} is crucial for the next section.
  
  \begin{coro}  \label{cor} Let $w=(\overline{a_1,\ldots,a_n})$ and $v=(\overline{b_1,\ldots,b_m})$ be two purely periodic quadratic irrationals. 
  If $b_r\geq Ma_r$ for every $r$, then there exist   constants $K_1(M)$ and  $K_2(M)$ such that 
 $$K_2(M)<\cos\theta\Im( S_{w,v}(e^{i\theta},r))
+\sin\theta \Re( S_{w,v}(e^{i\theta},r))<K_1(M).$$
Moreover if $M\geq e^{55}$, then $K_1(M)$ and $ K_2(M)$ are positive.
  \end{coro}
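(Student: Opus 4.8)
The plan is to feed the two-sided estimates of Theorem~\ref{S} into the linear combination, exploiting that on the arc $\theta\in[\frac{\pi}{3},\frac{2\pi}{3}]$ one has $\cos\theta\in[-\frac12,\frac12]$ and $\sin\theta\in[\frac{\sqrt3}2,1]$. Write $Q(\theta):=\cos\theta\,\Im(S_{w,v}(e^{i\theta},r))+\sin\theta\,\Re(S_{w,v}(e^{i\theta},r))$, which is nothing but $\Im\big(e^{i\theta}S_{w,v}(e^{i\theta},r)\big)$; this is precisely the quantity that arises, via Lemma~\ref{lemma2} and $dz=ie^{i\theta}d\theta$, when one compares the real parts of $f(w)$ and $f(v)$ along the geodesic arc from $\rho$ to $\rho^2$ for an $f$ that is real there, which is why the \emph{sign} of $Q$ is what ultimately matters. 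By Theorem~\ref{S} we have, for all such $\theta$, the bounds $C_2(r)<\Re(S_{w,v}(e^{i\theta},r))<C_1(r)$ together with $-13.02<\Im(S_{w,v}(e^{i\theta},r))<15$.

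First I would convert the hypothesis $b_r\ge Ma_r$ into a lower bound on $C_2(r)$. Since $b_r/a_r\ge M$, the logarithmic terms in $C_2(r)$ are of size $\log M$, but the prefactor $g(a_r):=1-\frac{2}{a_r}+\frac{1}{2a_r^2}$ governs the effective growth rate. As $g$ is increasing in $a_r$, its smallest value over $a_r\ge 2$ is $g(2)=\frac18$, attained at $a_r=2$ (where $k_0=1$); this is the worst case, and there $C_2(r)\ge -4.05+\frac14\log\frac{2M+2}{5}$ after using $b_r\ge 2M$. In particular $C_2(r)>0$ once $M\ge e^{55}$, so $\Re(S_{w,v}(e^{i\theta},r))>0$ on the whole arc.

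The two bounds then follow by sign bookkeeping. For the lower bound, $|\cos\theta\,\Im(S_{w,v})|\le\frac12\cdot 15=7.5$, while $\Re(S_{w,v})>0$ gives $\sin\theta\,\Re(S_{w,v})\ge\frac{\sqrt3}2\Re(S_{w,v})>\frac{\sqrt3}2 C_2(r)$, whence $Q(\theta)>K_2(M):=-7.5+\frac{\sqrt3}2 C_2(r)$. For the upper bound, $\cos\theta\,\Im(S_{w,v})\le 7.5$ and $\sin\theta\,\Re(S_{w,v})\le\Re(S_{w,v})<C_1(r)$, whence $Q(\theta)<K_1(M):=7.5+C_1(r)$. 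Both quantities are constant in $\theta$; $K_1(M)$ is positive as soon as the logarithmic term dominates the fixed constants in $C_1(r)$, which certainly holds for $M\ge e^{55}$. The only quantitative point is the positivity of $K_2(M)$: substituting the worst-case estimate for $C_2(r)$ gives $K_2(M)\ge -11.2+\frac{\sqrt3}{8}\log M$, which is positive once $\log M\gtrsim 52$, so the stated choice $M\ge e^{55}$ suffices with room to spare.

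The main obstacle is the worst-case analysis of the second paragraph: one must verify that $a_r=2$, rather than some larger $a_r$, minimizes $C_2(r)$ for large $M$. This reduces to the fact that the growth rate $2g(a_r)$ of the logarithm in $C_2(r)$ is smallest precisely at $a_r=2$, so the $a_r=2$ branch furnishes the binding constraint on positivity; it is this branch, with its small coefficient $\frac14$ on $\log M$, that forces the threshold to be as large as $e^{55}$.
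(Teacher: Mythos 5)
Your proof is correct and takes essentially the same route as the paper's: the paper deduces the corollary in one line from the bounds \eqref{re-ub}, \eqref{re-lb}, \eqref{im-ub} of Theorem~\ref{S} together with the observation that $-\tfrac12\le\cos\theta\le\tfrac12$ and $\tfrac{\sqrt3}{2}\le\sin\theta\le1$ on the arc, which is precisely your sign bookkeeping. Your explicit worst-case analysis at $a_r=2$ (where the coefficient of $\log M$ in $C_2(r)$ drops to $\tfrac14$, forcing the threshold $M\ge e^{55}$) is exactly the computation the paper leaves implicit in ``follows easily,'' and it checks out numerically.
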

  \begin{proof}This  follows easily from the bounds (\ref{re-ub}), (\ref{re-lb}), (\ref{im-ub}) together  with the simple observation that for $\pi/3\leq\theta\leq 2\pi/3$,  $-1/2\leq \cos\theta\leq 1/2$ and $\sqrt 3/2\leq\sin\theta \leq 1$.
  \end{proof}

 \section{Values of Modular functions}\label{main} 
 
 We start this section by 
 looking at the sequence of values $f^{nor}((\overline{N}))$, $N>2$ for a modular function which is real on the geodesic arc $\{e^{i\theta}\colon \pi/3\leq \theta\leq 2\pi/3\}.$ 
 We have 
\begin{theorem}\label{lim}
  Let $f(z)=\sum_{n\geq 0} c(n) q^n$ be a modular function which is 
 real valued on the geodesic arc $\{e^{i\theta}\colon \pi/3\leq \theta\leq 2\pi/3\}$. For any positive integer $N>2$,   
 the value $f^{nor}((\overline{N}))$ for the quadratic irrational $w=(\overline{N})$  is real and
$$
 \lim_{N\rightarrow\infty} f^{nor}((\overline{N}))=-\int_{\pi/3}^{2\pi/3}   f(e^{i\theta}) \sin\theta \,d\theta=c(0).
$$
In particular we have 
$$ 
 \lim_{N\rightarrow\infty} j^{nor}((\overline{N}))=744.
$$
 \end{theorem}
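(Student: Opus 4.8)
The plan is to feed the explicit data of $w=(\overline N)$ into Lemma~\ref{lemma2} and read off the $N\to\infty$ asymptotics of the kernel. For $w=(\overline N)$ one has $w=\tfrac{N+\sqrt{N^2-4}}2=\varepsilon$, so $2\log\varepsilon=2\log N+O(1)$, and $\tilde w=1/w\in(0,1)$ with $\tilde w\to0$. Since $n=1$, the decomposition of $K$ displayed after \eqref{kernel} reads $K(z,w)=\sum_{k=1}^{N-1}\big(\tfrac1{z-w_{1,k}}-\tfrac1{z-\tilde w_{1,k}}\big)$ with $w_{1,k}=k-\tilde w=:x_k$ and $\tilde w_{1,k}=-x_{N-k}$. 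The point symmetry $\tilde w_{1,N-k}=-w_{1,k}$ collapses this to
\begin{equation*}
K(z,w)=\sum_{k=1}^{N-1}\Big(\frac1{z-x_k}-\frac1{z+x_k}\Big),
\end{equation*}
an even function of $z$ with real coefficients. Writing $z=e^{i\theta}$, we get $f^{nor}(w)=\tfrac1{2\log\varepsilon}\int_{\pi/3}^{2\pi/3}f(e^{i\theta})\,ie^{i\theta}K(e^{i\theta},w)\,d\theta$, so everything reduces to the behaviour of the normalized kernel.

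The core step is to prove that $\tfrac{ie^{i\theta}K(e^{i\theta},w)}{2\log\varepsilon}\to -ie^{i\theta}=\sin\theta-i\cos\theta$ uniformly for $\theta\in[\pi/3,2\pi/3]$. A direct computation gives $\Re(ie^{i\theta}K)=\sin\theta\sum_k g_\theta(x_k)$ and $\Im(ie^{i\theta}K)=\sum_k h_\theta(x_k)$, with $g_\theta(x)=\tfrac{2x(1+x^2)}{(1+x^2)^2-4x^2\cos^2\theta}$ and $h_\theta(x)=\tfrac{2x\cos\theta(1-x^2)}{(1+x^2)^2-4x^2\cos^2\theta}$; on our $\theta$-range the denominator stays positive since $\cos^2\theta\le\tfrac14$, and $g_\theta(x)=\tfrac2x+O(x^{-3})$, $h_\theta(x)=-\tfrac{2\cos\theta}x+O(x^{-3})$ uniformly. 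As $\sum_{k=1}^{N-1}x_k^{-1}=\log N+O(1)$ while the $O(x^{-3})$ remainders sum to $O(1)$, both $\Re(ie^{i\theta}K)/(2\log\varepsilon)\to\sin\theta$ and $\Im(ie^{i\theta}K)/(2\log\varepsilon)\to-\cos\theta$. I expect this two-sided, uniform bookkeeping to be the main obstacle: the imaginary part is \emph{not} negligible---it too grows like $\log N$---so it cannot be dropped, and the limiting density is the full $-ie^{i\theta}$ rather than $\sin\theta$ alone.

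Granting the uniform limit, boundedness of $f$ on the compact arc lets me exchange limit and integral:
\begin{equation*}
\lim_{N\to\infty}f^{nor}((\overline N))=\int_{\pi/3}^{2\pi/3}f(e^{i\theta})(-ie^{i\theta})\,d\theta=-\int_{\rho}^{\rho^2}f(z)\,dz .
\end{equation*}
I evaluate this by deforming the arc, within $\H$ and keeping the endpoints fixed, to the contour that goes up $\Re z=\tfrac12$, across at height $T$, and down $\Re z=-\tfrac12$. The two vertical pieces cancel for every $T$ by periodicity $f(z+1)=f(z)$, and on the horizontal piece $f(x+iT)\to c(0)$ uniformly, contributing $\int_{1/2}^{-1/2}c(0)\,dx=-c(0)$ as $T\to\infty$; hence $\int_{\rho}^{\rho^2}f\,dz=-c(0)$ and the limit equals $c(0)$. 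Reality of $f^{nor}((\overline N))$ for each finite $N$ comes from the same reflection $\theta\mapsto\pi-\theta$: the imaginary-part density satisfies $h_{\pi-\theta}=-h_\theta$, so once $f(-\bar z)=f(z)$ on the arc---automatic for a modular function with real $q$-coefficients that is real there, in particular $f=j$---the imaginary part of the integrand is odd about $\theta=\pi/2$ and integrates to $0$.

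Finally, for $j$ the contour shift cannot be applied directly, since $j$ has a pole at the cusp. I would split $j=q^{-1}+(j-q^{-1})$: the principal part contributes nothing, because $\int_{\rho}^{\rho^2}e^{-2\pi iz}\,dz=0$ (the endpoints $\rho,\rho^2$ have equal imaginary part and $e^{-2\pi i\rho}=e^{-2\pi i\rho^2}$), while $j-q^{-1}$ is holomorphic on $\H$ with constant term $744$, so the shift argument applies to it verbatim and gives $-\int_{\rho}^{\rho^2}(j-q^{-1})\,dz=744$. Therefore $\lim_{N\to\infty}j^{nor}((\overline N))=744$.
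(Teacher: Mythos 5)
Your argument is correct and follows essentially the same route as the paper: collapse the kernel for $(\overline{N})$ to $\sum_{k}\bigl(\tfrac{1}{z-w_k}-\tfrac{1}{z+w_k}\bigr)$, show that the normalized kernel converges uniformly on the arc, use the reflection $\theta\mapsto\pi-\theta$ to get reality, and evaluate $\int_\rho^{\rho^2}f(z)\,dz$ by pushing the contour to the cusp. Two minor points in your favour: you supply the explicit asymptotics ($g_\theta(x)=2/x+O(x^{-3})$ and the harmonic sum $\sim\log N$) that the paper leaves implicit, you treat the $q^{-1}$ pole of $j$ separately in the contour shift, and your sign $\lim f^{nor}=+\int_{\pi/3}^{2\pi/3}f(e^{i\theta})\sin\theta\,d\theta$ is the one consistent with the answer $c(0)$ (test $f\equiv1$), the minus sign in the paper's displayed formula being a typo.
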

For $w=(\overline{N})$, the function $K_N(z):=K(z, w)$ defined by \eqref{kernel} satisfies the following  simple lemma.
\begin{lemma}\label{lema0}
 $$K_N(e^{i(\pi-\theta)})=\overline{K_N(e^{i\theta})}.
 $$
\end{lemma}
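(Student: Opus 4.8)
The plan is to show that the rational function $K_N$ is \emph{even}, i.e.\ that $K_N(-z)=K_N(z)$ identically, and then to combine this with the fact that all of its poles are real. Once both facts are in hand the stated reflection identity is immediate, since $e^{i(\pi-\theta)}=-e^{-i\theta}=-\overline{e^{i\theta}}$.

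First I would write down the poles of $K_N$ explicitly. For $w=(\N)$ the period has length $n=1$ with $a_1=N$, so the general description of the $w_{r,k}$ specializes to $w_{1,k}=(k,\N)$ and $\tilde w_{1,k}=-(N-k,\N)$ for $1\le k\le N-1$. Writing $w=(\N)=\frac{N+\sqrt{N^2-4}}{2}$, its Galois conjugate $\tilde w=\frac{N-\sqrt{N^2-4}}{2}$ satisfies $w\tilde w=1$ and $w+\tilde w=N$; hence $(k,\N)=k-\tfrac1w=k-\tilde w$ and $-(N-k,\N)=k-N+\tilde w=k-w$. Thus
\begin{equation*}
K_N(z)=\sum_{k=1}^{N-1}\Big(\dfrac{1}{z-(k-\tilde w)}-\dfrac{1}{z-(k-w)}\Big),
\end{equation*}
a finite sum of simple poles with real residues $\pm1$ located at the real points $k-\tilde w$ and $k-w$.

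Next I would prove evenness. Replacing $z$ by $-z$ and using $\frac{1}{-z-a}=-\frac{1}{z-(-a)}$ rewrites each summand as $\frac{1}{z-(-(k-w))}-\frac{1}{z-(-(k-\tilde w))}$, so the poles of $K_N(-z)$ sit at $-(k-w)=w-k$ (residue $+1$) and $-(k-\tilde w)=\tilde w-k$ (residue $-1$). Here the relation $w+\tilde w=N$ is decisive: $w-k=(N-k)-\tilde w$ and $\tilde w-k=(N-k)-w$, so after the reindexing $k\mapsto N-k$, which merely permutes $\{1,\dots,N-1\}$, these become exactly the poles $j-\tilde w$ and $j-w$ of $K_N(z)$ with the matching residues. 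Since both $K_N(z)$ and $K_N(-z)$ are finite sums of simple poles vanishing at infinity, agreement of poles and residues forces $K_N(-z)=K_N(z)$ as rational functions.

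Finally I would assemble the statement. By evenness, $K_N(e^{i(\pi-\theta)})=K_N(-e^{-i\theta})=K_N(e^{-i\theta})$; and because every pole and residue is real, complex conjugation commutes with the substitution $z\mapsto\bar z$, giving $K_N(e^{-i\theta})=\overline{K_N(e^{i\theta})}$, which is precisely the claim. I do not expect a genuine obstacle here: the only real content is the explicit identification of the poles, after which the symmetry $k\mapsto N-k$ together with $w+\tilde w=N$ makes $K_N$ even, and the reflection identity then falls out of the reality of the poles.
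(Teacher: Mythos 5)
Your proof is correct, but it proceeds differently from the paper's. The paper works with the grouped form $K_N(z)=\sum_{k=1}^{N-1}\bigl(\frac{1}{z-w_k}-\frac{1}{z+w_k}\bigr)$ (using $w_k=-\tilde w_{N-k}$) and simply computes the real and imaginary parts of each summand explicitly as rational expressions in $\cos\theta$ and $\sin\theta$ (equations \eqref{re} and \eqref{im}); the real part involves only $\cos^2\theta$ and $\sin^2\theta$, hence is invariant under $\theta\mapsto\pi-\theta$, while the imaginary part carries a factor $\sin\theta\cos\theta$ and flips sign, which is exactly the stated conjugation identity. You instead establish the two structural facts behind this computation: that $K_N$ is an even rational function (via the pole/residue bookkeeping driven by $w+\tilde w=N$, $w\tilde w=1$, and the reindexing $k\mapsto N-k$) and that all its poles and residues are real, so that $K_N(\bar z)=\overline{K_N(z)}$; combining these with $e^{i(\pi-\theta)}=-\overline{e^{i\theta}}$ gives the claim. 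Both arguments ultimately rest on the same symmetry $w_k=-\tilde w_{N-k}$; yours makes the mechanism transparent and avoids trigonometric computation, while the paper's explicit formulas \eqref{re} and \eqref{im} are reused later in the proof of Theorem~\ref{lim} to compute the limit of $K_N/(2\log\varepsilon_w)$, which is something your more abstract route does not supply.
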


\begin{proof}
A simple calculation shows
\begin{equation}\label{re}
  \Re\Big(\dfrac{1}{e^{i\theta}-w_k}\Big)=\dfrac{2w_k(\cos^2\theta-w_k^2-\sin^2\theta)}{((\cos\theta-w_k)^2+\sin^2\theta)
  ((\cos\theta+w_k)^2+\sin^2\theta)}
  \end{equation}
 and
 \begin{equation}\label{im}
 \Im\Big(\dfrac{1}{e^{i\theta}-w_k}\Big)=\dfrac{-4w_k\sin\theta \cos\theta}{((\cos\theta-w_k)^2+\sin^2\theta)
  ((\cos\theta+w_k)^2+\sin^2\theta)}.
  \end{equation}
  These two equalities imply the lemma.
\end{proof}

 \begin{proof} 
 \noindent {\it  (Theorem \ref{lim})}:

Let $w=(\N)$. We have that $w=\frac{N+\sqrt{N^2-4}}{2}$ and $Q_w=[1,-N,1]$. Hence Pell's equation becomes
$$
t^2-(N^2-4)u^2=4,
$$
with $(N,1)$ being the smallest positive solution. Thus $\varepsilon_w=\frac{N+\sqrt{N^2-4}}{2}$.
Since $w_k=(k,\N)=-\tilde w_{N-k}$,  
 $$
 f^{nor}((\N))=\int_\rho^{\rho^2} \dfrac{f(z)}{2\log\varepsilon_w}  K_N(z) dz
 $$
 with  
 $$
K_N(z)=\sum_{k=1}^{N-1} \dfrac{1}{z-w_k}-\dfrac{1}{z+w_k}.
$$
 Now 
$$
\Re(f^{nor}((\N)))= -\int_{\pi/3}^{2\pi/3}\dfrac{f(e^{i\theta})}{2\log\varepsilon_w} \Big(\cos\theta
 \Im(K_N(e^{i\theta}))
+\sin\theta \Re(K_N(e^{i\theta})) \Big) d\theta
$$
and
$$
\Im(f^{nor}((\N)))= \int_{\pi/3}^{2\pi/3}\dfrac{f(e^{i\theta})}{2\log\varepsilon_w} \Big(\cos\theta
 \Re(K_N(e^{i\theta}))
-\sin\theta \Im(K_N(e^{i\theta}))\Big) d\theta.
$$
Writing $\int_{\pi/3}^{2\pi/3} = \int_{\pi/3}^{\pi/2} + \int_{\pi/2}^{2\pi/3}$
and using Lemma \ref{lema0}, we have that 
$$
\Im(f^{nor}((\N)))=0.
$$
Since $w_k=k-\frac{1}{a_{r+1}-\frac{1}{\cdots}},$ it follows from   $\varepsilon_w=\frac{N+\sqrt{N^2-4}}{2},$     \eqref{re} and \eqref{im}  that,  
for all $\theta\in[\frac{\pi}{3},\frac{2\pi}{3}]$,
 $$
 \lim_{N\rightarrow\infty} \dfrac{\cos\theta
 \Im(K_N(e^{i\theta}))
+\sin\theta \Re(K_N(e^{i\theta}))}{2\log \varepsilon_w} =\sin\theta.
 $$
 Thus
 $$
 \lim_{N\rightarrow\infty} f^{nor}((\N))=-\int_{\pi/3}^{2\pi/3}   f(e^{i\theta})\sin\theta \,d\theta= c(0).
 $$
    
 To see the  last equality  we  note that  $f$ is real on the arc $\{e^{i\theta}\colon \pi/3\leq \theta\leq 2\pi/3\}$ and hence the integral is just the parametrization of   $\int f(z) dz$ on this arc. On the other hand, due to the holomorphicity of  $f$  the integral is independent of the path of integration and  as such gives the  constant Fourier coefficient of $f$. 

 \end{proof}

Our next result  compares the  values of a modular function at two different quadratic irrationalities by comparing their corresponding partial quotients. The results  that were  given in  the introduction will then follow as corollaries of this main result. More precisely we have
 
 \begin{theorem}\label{main-thm} Let $f(z)=\sum_{n\geq 0} c(n) q^n$ be a modular function which is 
 real valued on the geodesic arc $\{e^{i\theta}\colon \pi/3\leq \theta\leq 2\pi/3\}$.  Suppose that 
\begin{equation}\label{condition}
\Re(f^{nor}((1+\sqrt{5})/2))<c(0).
\end{equation}
Then the following holds:  Let  $w$  and $v$ be two quadratic irrationals with respective periods $\overline{a_1,\ldots, a_n}$ 
and $\overline{b_1,\ldots,b_m}$ such that $m$ divides $n$. If $M=e^{55}$ and 
$b_r\geq M a_r$ for all $r=1,\ldots, n$,
then
  $$\Re(f^{nor}(w))<\Re(f^{nor}(v)).$$
\begin{remark} As the proof of   Theorem~\ref{main-thm} will show, the condition (\ref{condition}) can be replaced by the condition that $\Re(f^{nor}(\alpha))<c(0)$ for some quadratic irrational $\alpha$.
\end{remark}

%

\end{theorem}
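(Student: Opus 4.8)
The plan is to rewrite each normalized value as a weighted integral of $f$ along the arc and then compare the two weights block by block. First I would parametrize the geodesic integral of Lemma~\ref{lemma2} by $z=e^{i\theta}$, $\theta\in[\pi/3,2\pi/3]$, $dz=ie^{i\theta}\,d\theta$. Exactly as in the proof of Theorem~\ref{lim}, taking real parts and using that $f$ is real on the arc yields
\[
\Re(f^{nor}(w))=\frac{\displaystyle\int_{\pi/3}^{2\pi/3} f(e^{i\theta})\,\Phi_w(\theta)\,d\theta}{\displaystyle\int_{\pi/3}^{2\pi/3}\Phi_w(\theta)\,d\theta},\qquad
\Phi_w(\theta):=\cos\theta\,\Im K(e^{i\theta},w)+\sin\theta\,\Re K(e^{i\theta},w),
\]
where the denominator equals $-2\log\varepsilon_w$, because for $f\equiv 1$ Lemma~\ref{lemma2} gives $\int_\rho^{\rho^2}K(z,w)\,dz=2\log\varepsilon_w$, the geodesic length.

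Since $m\mid n$, I would next replace the period of $v$ by its $n/m$-fold repetition; this leaves $f^{nor}(v)$ unchanged (both $f(v)$ and $2\log\varepsilon_v$ scale by $n/m$) and lets me index $w$ and $v$ by a common $r=1,\dots,n$ with $b_r\ge Ma_r$ throughout. Then $\Phi_w-\Phi_v=\sum_{r=1}^n\Psi(\theta,r)$, where $\Psi(\theta,r)=\cos\theta\,\Im S_{w,v}(e^{i\theta},r)+\sin\theta\,\Re S_{w,v}(e^{i\theta},r)$. With $M=e^{55}$, Corollary~\ref{cor} gives $\Psi(\cdot,r)>K_2(M)>0$ for every $r$, so the difference $\psi:=\Phi_w-\Phi_v$ is strictly positive on the whole arc; integrating, $\int\psi=2\log\varepsilon_v-2\log\varepsilon_w>0$.

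Writing $L_w=2\log\varepsilon_w$, $L_v=2\log\varepsilon_v$ and $S=\int\psi=L_v-L_w$, linearity of the integral now gives the exact identity
\[
\Re(f^{nor}(v))=\frac{L_w}{L_v}\,\Re(f^{nor}(w))+\frac{S}{L_v}\,\overline{f}^{\,\psi},\qquad
\overline{f}^{\,\psi}:=\frac{\displaystyle\int_{\pi/3}^{2\pi/3} f(e^{i\theta})\,\psi(\theta)\,d\theta}{\displaystyle\int_{\pi/3}^{2\pi/3}\psi(\theta)\,d\theta},
\]
which is a genuine convex combination, since $L_w/L_v$ and $S/L_v$ are positive and sum to $1$. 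Consequently the whole theorem collapses to the single inequality $\Re(f^{nor}(w))<\overline{f}^{\,\psi}$: if it holds, $\Re(f^{nor}(v))$ lies between $\Re(f^{nor}(w))$ and $\overline{f}^{\,\psi}$ and the claim follows, while the reverse inequality would reverse the conclusion.

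It remains to prove $\Re(f^{nor}(w))<\overline{f}^{\,\psi}$, and this is the main obstacle; the three preceding steps are essentially formal. Because $\psi>0$, the quantity $\overline{f}^{\,\psi}$ is an honest positive-weight average of $f$ on the arc, and its weight is, up to the minor sums $S_1,S_2$, governed by the long blocks $S_3=\sum_{a_r\le k<b_r}(\cdots)$ of consecutive terms — precisely the configuration that in Theorem~\ref{lim} drives the average to the constant term $c(0)$. I would therefore use the explicit two-sided bounds \eqref{re-ub}--\eqref{im-ub} of Theorem~\ref{S} to pin $\overline{f}^{\,\psi}$ near $c(0)$, and invoke the hypothesis $\Re(f^{nor}((1+\sqrt5)/2))<c(0)$ (or, by the Remark, $\Re(f^{nor}(\alpha))<c(0)$ for a single $\alpha$) to anchor $\Re(f^{nor}(w))$ strictly below it. This quantitative estimate is exactly what forces the large threshold $M=e^{55}$, and it is where essentially all the remaining work lies.
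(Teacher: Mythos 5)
Your first three steps are correct and are essentially a cleaner repackaging of the paper's own reduction: the convex-combination identity
\[
\Re(f^{nor}(v))=\frac{L_w}{L_v}\,\Re(f^{nor}(w))+\frac{S}{L_v}\,\overline{f}^{\,\psi}
\]
is exactly the content of the paper's relations $\Re(f(v))=\Re(f(w))+\mu(M)$ and $\log\varepsilon_v=\log\varepsilon_w+\lambda(M)$, and your threshold $\overline{f}^{\,\psi}$ plays precisely the role of the paper's $\mu(M)/\lambda(M)$; the positivity input from Corollary~\ref{cor} is used identically. So up to the point where you reduce the theorem to the single inequality $\Re(f^{nor}(w))<\overline{f}^{\,\psi}$, you and the paper agree.

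The problem is that this remaining inequality \emph{is} the theorem, and the plan you sketch for it would not go through. First, ``anchoring $\Re(f^{nor}(w))$ strictly below $c(0)$'' for a general $w$ is not available at this stage: hypothesis \eqref{condition} controls only the single value at $(1+\sqrt5)/2$, and the statement $\Re(f^{nor}(w))\le c(0)$ for all $w$ is Corollary~\ref{coro1}, which is \emph{deduced from} Theorem~\ref{main-thm} --- invoking it here is circular. Second, the two-sided bounds of Theorem~\ref{S} do not ``pin $\overline{f}^{\,\psi}$ near $c(0)$'': they only give $nK_2(M)\le\psi\le nK_1(M)$ with $K_1(M)/K_2(M)$ far from $1$, so they localize $\overline{f}^{\,\psi}$ in a wide multiplicative window around $\frac{3}{\pi}\int_{\pi/3}^{2\pi/3}f(e^{i\theta})\,d\theta$, a quantity that has no reason to equal $c(0)$. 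The paper's actual mechanism, which is absent from your proposal, is a bootstrapping argument exploiting the symmetry of your own identity: since $\Re(f^{nor}(w))<\overline{f}^{\,\psi}$ holds if and only if $\Re(f^{nor}(v))<\overline{f}^{\,\psi}$, one may test the threshold against whichever side is understood. Taking $w=(1+\sqrt5)/2$ and $v=(\overline{N})$ with $N\to\infty$, Theorem~\ref{lim} forces $\Re(f^{nor}(v))\to c(0)$, and combining this with \eqref{condition} yields, by contradiction, that the threshold exceeds $c(0)$; this gives $\Re(f^{nor}(v))<\overline{f}^{\,\psi}$ for all admissible $v$ anchored at the golden ratio, and a general pair $(w,v)$ is then handled by inserting an auxiliary $v_0$ with $b_r>\max\{3M,Ma_r\}$ and transferring the inequality back to $w$. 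Without this use of Theorem~\ref{lim} and the golden-ratio anchor, your key inequality remains unproved.
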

\smallskip
\begin{proof}

We have that
 $$
f(w)-f(v)=\int_\rho^{\rho^2} f(z) \sum_{r=1}^n S_{w,v}(z,r) dz
$$
with $S_{w,v}(z,r)$ defined as in \eqref{1}, so
$$
\Re(f(w)-f(v))=-\int_{\pi/3}^{2\pi/3}f(e^{i\theta})\sum_{r=1}^n\cos\theta\Im( S_{w,v}(e^{i\theta},r))
+\sin\theta \Re( S_{w,v}(e^{i\theta},r))d\theta.
$$
By Theorem \ref{S}  and Corolary \ref{cor} we obtain
\begin{equation}\label{b1}
\Re(f(v))=\Re(f(w))+\mu(M)
\end{equation}
with
$$
nK_2(M)\int_{\pi/3}^{2\pi/3} f(e^{i\theta}) d\theta<\mu(M)
<nK_1(M)\int_{\pi/3}^{2\pi/3} f(e^{i\theta}) d\theta,
$$
$K_1(M)$, $K_2(M)$ being the positive constants from Corollary \ref{cor}.
In particular, if $f=1$, then
 \begin{equation}\label{log}
\log\varepsilon_v=\log\varepsilon_w+\lambda(M)
\end{equation}
with
$$
\frac{\pi}{3}nK_2(M) < \lambda(M) <\frac{\pi}{3}nK_1(M).
 $$
By definition, the inequality
\begin{align}\label{<}
\Re(f^{nor}(w))<\Re(f^{nor}(v))
\end{align}
holds if and only if
\begin{equation}\label{insandwich1}
 \Re(f(w)) \log\varepsilon_v < \Re(f(v)) \log\varepsilon_w. 
\end{equation}
 Now \eqref{b1} and \eqref{log} imply that 
\eqref{insandwich1} is equivalent to
  \begin{equation}\label{insandwich3}
   \Re(f^{nor}(v)) < \dfrac{\mu(M)}{\lambda(M)}
  \end{equation}
or also to
\begin{equation}\label{insandwich4}
   \Re(f^{nor}(w)) < \dfrac{\mu(M)}{\lambda(M)}.
  \end{equation}
  Since the last inequality does not depend on $v$ and is equivalent to \eqref{insandwich3}, the inequality 
    \eqref{insandwich3} holds either for all or for no  $v.$ 
     We first show that   $ \dfrac{\mu(M)}{\lambda(M)}> c(0). $
 Suppose on the contrary 
    $ \dfrac{\mu(M)}{\lambda(M)}\leq c(0). $  Let $v=(\overline{N}).$ For any $\epsilon>0$, for large enough $N>N_0$ using Theorem~\ref{lim} we   
    have $\Re (f^{nor}(v))>c(0)-\epsilon > \frac{\mu(M)}{\lambda(M)}$. 
 But then, since \eqref{insandwich3} is equivalent to \eqref{<}, for $N>\max{\{3M,N_0\}}$ and $w =\frac{1+\sqrt{5}}{2}$, we have  
  $$
  \Re(f^{nor}(w))= \Re(f^{nor}(1+\sqrt{5})/2)>\Re(f^{nor}(v)) > c(0)-\epsilon. $$ 
  Since this contradicts the assumption (\ref{condition}), we must indeed have $ \dfrac{\mu(M)}{\lambda(M)}> c(0). $
 Then with  $w= \frac{1+\sqrt{5}}{2}$,   we have $\Re(f^{nor}(w))=\Re(f^{nor}(\frac{1+\sqrt{5}}{2}))< c(0)<\dfrac{\mu(M)}{\lambda(M)}$. 
 Hence   for every $v=(\overline{b_1,\ldots,b_m})$ with $b_r>3M$ we have that
$$\Re(f^{nor}(v))<\dfrac{\mu(M)}{\lambda(M)}.$$
Now choose $  v_0=(\overline{b_1,\ldots,b_m})$ with $b_r>\max{\{3M,Ma_r\}}$. Then for this $v_0$ we have $\Re(f^{nor}(v_0))<\dfrac{\mu(M)}{\lambda(M)}$  
and hence  $\Re(f^{nor}(w))<\dfrac{\mu(M)}{\lambda(M)}.$  But this is equivalent to 
$$
\Re(f^{nor}(w))<\Re(f^{nor}(v)).
$$
 
 \end{proof}

We have the following immediate corollaries.

 \begin{coro}\label{coro1}
 For any quadratic irrational $w$,  and any modular function $f$ which satisfies the conditions of Theorem~\ref{main-thm} we have  
 $$\Re(f^{nor}(w))\leq -\int_{\pi/3}^{2\pi/3} f(e^{i\theta})\sin\theta\,d\theta=c(0).$$
 \end{coro}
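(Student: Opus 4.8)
The plan is to derive Corollary~\ref{coro1} directly from Theorem~\ref{main-thm} and Theorem~\ref{lim}, by comparing $w$ against the one-parameter family $(\overline{N})$ and then letting $N\to\infty$. First I would reduce to the case in which $w$ admits a purely periodic negative continued fraction expansion: since $f^{nor}(w)$ is built from the $\Gamma$-invariant cycle integral of $f$ along the closed geodesic $C_w$, its value depends only on the $\Gamma$-class of $w$, and every real quadratic irrationality is $\Gamma$-equivalent to one whose expansion is purely periodic. Thus we may write $w=(\overline{a_1,\ldots,a_n})$ and set $A=\max_{1\le r\le n} a_r$.

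With $M=e^{55}$, fix any integer $N>MA$ and put $v=(\overline{N})$. Its period length is $m=1$, which divides $n$, and after cycling its partial quotients all equal $N$, so $b_r=N>Ma_r$ for every $r=1,\ldots,n$. Hence the hypotheses of Theorem~\ref{main-thm} are satisfied and we obtain $\Re(f^{nor}(w))<\Re(f^{nor}((\overline{N})))$ for all $N>MA$. Letting $N\to\infty$ and applying Theorem~\ref{lim}, the right-hand side converges to $-\int_{\pi/3}^{2\pi/3} f(e^{i\theta})\sin\theta\,d\theta=c(0)$, so passing to the limit in these strict inequalities gives $\Re(f^{nor}(w))\le c(0)$, which is exactly the assertion (the final equality with $c(0)$ being part of Theorem~\ref{lim}).

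The only genuine content is in checking the hypotheses of Theorem~\ref{main-thm} for this comparison family: the divisibility requirement is automatic because $m=1$, and the size condition $b_r\ge Ma_r$ is secured simply by taking $N$ large. I expect no serious obstacle beyond the standard reduction to a purely periodic $w$ together with the $\Gamma$-invariance of $f^{nor}$. The one point worth flagging is that the passage to the limit necessarily degrades the strict inequalities to the non-strict bound $\le$, which is precisely why the corollary is stated with $\le$ rather than $<$.
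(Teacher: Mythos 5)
Your proposal is correct and follows essentially the same route as the paper: compare $w$ with $v=(\overline{N})$ for $N$ large enough that $N\geq e^{55}a_r$ for all $r$, apply Theorem~\ref{main-thm}, and let $N\to\infty$ using Theorem~\ref{lim}. The extra points you flag (reduction to a purely periodic representative via $\Gamma$-invariance, the divisibility $m=1\mid n$, and the weakening of the strict inequality to $\leq$ in the limit) are all consistent with, and slightly more careful than, the paper's two-line argument.
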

  \begin{proof}For  any $w=(\overline{a_1,\ldots, a_n})$, choose   $N\in\mathbb N$ large enough, so that $N \geq e^{55}a_r$ for all $1\leq r\leq n$. Then by Theorem \ref{main-thm} we have 
  $$\Re( f^{nor}(w)) < \Re (f^{nor} (\overline{N})).$$
  Using  Theorem \ref{lim}, this gives 
   $$\Re( f^{nor}(w)) < \lim_{N\rightarrow\infty}\Re (f^{nor} (\overline{N}))=c(0).$$
 \end{proof}
 
\begin{coro}\label{coro2}
 Let $f$ be a modular function   which satisfies (\ref{condition}) and   $v$ be a quadratic irrational. If all the partial quotients in the period of $v$ are greater than 3M, then
 $$ 
 \Re(f^{nor}(v))\geq f^{nor}((1+\sqrt{5})/2).
 $$
\end{coro}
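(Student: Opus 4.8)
The plan is to obtain the inequality in one stroke from Theorem~\ref{main-thm}, by comparing $v$ against the golden ratio. First I would replace $(1+\sqrt5)/2$ with the $\Gamma$-equivalent representative $(\overline 3)$. Indeed $T\cdot\frac{1+\sqrt5}{2}=\frac{1+\sqrt5}{2}+1=\frac{3+\sqrt5}{2}$, and a direct check that $x=3-1/x$ has larger root $\frac{3+\sqrt5}{2}$ shows $\frac{3+\sqrt5}{2}=(\overline 3)$; hence $(1+\sqrt5)/2$ and $(\overline3)$ determine the same geodesic and $f^{nor}((1+\sqrt5)/2)=f^{nor}((\overline3))$. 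Applying Theorem~\ref{lim} with $N=3$, this common value is real, which is why the statement may omit the real part on the right-hand side.

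Next I would invoke Theorem~\ref{main-thm} with $(\overline3)$ in the role of the quadratic $w$ (the one with the smaller partial quotients) and the given $v=(\overline{b_1,\dots,b_m})$ in the role of $v$. The only hypothesis requiring attention is the divisibility condition $m\mid n$: the golden ratio has minimal period length $1$, so I would present it through the redundant length-$m$ period $(\overline{3,\dots,3})$, which is literally the same number $(\overline3)$ but now has $n=m$ and all partial quotients $a_r=3$. With this presentation the hypotheses of Theorem~\ref{main-thm} read exactly $b_r>3M=M\cdot 3=Ma_r$ for every $r$ (granted, since each partial quotient of $v$ exceeds $3M$) together with \eqref{condition}, which holds by assumption. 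The theorem then yields $\Re(f^{nor}((\overline3)))<\Re(f^{nor}(v))$, and since $f^{nor}((\overline3))=f^{nor}((1+\sqrt5)/2)$ is real this is precisely $f^{nor}((1+\sqrt5)/2)<\Re(f^{nor}(v))$, which is stronger than the asserted inequality.

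The one subtle point, and the step I would justify most carefully, is legitimising the passage to the length-$m$ presentation of the golden ratio. Feeding a period repeated $\ell$ times into the kernel $K(z,\cdot)$ of Lemma~\ref{lemma2} multiplies it by $\ell$, hence multiplies both the cycle integral $f(w)$ and, taking $f=1$, the length $2\log\varepsilon$ by the same factor $\ell$; the normalized value $f^{nor}$ that appears in the conclusion of Theorem~\ref{main-thm} is their quotient and is therefore unchanged. Thus the divisibility hypothesis $m\mid n$ can always be met by padding the shorter period without affecting any normalized value, and once this is in place the corollary is a direct instantiation of Theorems~\ref{main-thm} and \ref{lim}.
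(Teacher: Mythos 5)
Your proof is correct and follows essentially the same route as the paper, whose entire argument is to apply Theorem~\ref{main-thm} with $w=(1+\sqrt{5})/2=(2,\overline{3})$. The extra details you supply --- the $\Gamma$-equivalence of $(1+\sqrt{5})/2$ with $(\overline{3})$, the realness of $f^{nor}((1+\sqrt{5})/2)$ via Theorem~\ref{lim}, and the padding of the length-one period to $(\overline{3,\dots,3})$ so that the divisibility hypothesis $m\mid n$ is met without changing any normalized value --- are precisely the points the paper leaves implicit, and your justification of the padding is sound.
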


\begin{proof}
This follows from Theorem \ref{main-thm} applied to $w=\frac{1+\sqrt{5}}{2}=(2, \bar 3)$.
\end{proof}

Finally,  these corollaries prove Theorem~\ref{thm-intro2} since for the $j$ invariant the assumption  in the statement of  Theorem \ref{main-thm} is easily verified. Namely,  we have 
 
 $$
 \Re(j^{nor}((1+\sqrt{5})/2))=706.3248\ldots <744.$$


\begin{thebibliography}{9}




\bibitem{BI} P. Bengoechea, \"O. Imamoglu, \emph{Cycle integrals of the modular functions, Markov geodesics and a conjecture of Kaneko}, Algebra  Number Theory 13-4 (2019), 943-962.

\bibitem{biro1} Andr\'as Bir\'o, \emph{ Chowla's conjecture}, Acta.  Arith. 107 (2003) no. 2, 179--194.

 \bibitem{biro2} Andr\'as Bir\'o, \emph{Yokoi's conjecture}, Acta. Arith. 106 (2003) no.1, 85--104
 
 \bibitem{biro-lapkova} Andr\'as Bir\'o , Lapkova, Kastadinka, \emph{The class number one problem for the real quadratic fields $\Q(\sqrt{(an)^2+4a})$}, Acta Arith. 172 (2016) no2, 117--131
 
 \bibitem{C-F} S. Chowla and J. Friedlander, \emph{ Class numbers and quadratic residues}, Glasgow Math. J. 17 (1976), 47--52. 
 


\bibitem{DFI} W. Duke, J.B. Friedlander, H. Iwaniec, \emph{Weyl sums for quadratic roots}, Int. Math. Res. Not. IMRN 2012, no. 11, 2493--2549.

\bibitem{DIT} W. Duke, \"O. Imamoglu, A. Toth, \emph{Cycle integrals of the j-function and mock modular forms}, Ann. of Math. (2) 173 (2011), no. 2, 947--981.

\bibitem{kaneko}
M. Kaneko, \emph{Observations on the `values' of the elliptic modular function $j(\tau)$ at real quadratics}, Kyushu Journal of Mathematics 63 (2009), no. 2, 353-364.




\bibitem{katok} S. Katok  \emph{Coding of closed geodesics after Gauss and Morse.}  Geom. Dedicata 63 (1996), no. 2, 123--145. 

\bibitem{KZ}
W. Kohnen, D. Zagier, \emph{Modular forms with rational periods}, in: Modular Forms, R.A. Rankin (ed.), Ellis Horwood, Chichechester (1984) 197-249.

\bibitem{masri} R. Masri, \emph{The asymptotic distribution of traces of cycle integrals of the j-function}, 
Duke Math. J. 161 (2012), no. 10, 197--2000.

\bibitem{papcke} S. P\"apcke, \emph{Values of the j-function and its relation to Markoff numbers}, Master thesis, ETH Z\"urich. 

\bibitem{yokoi} H. Yokoi, \emph{ Class number one problem for certain kind of real quadratic fields}, in   Proc. Internat. Conf. (Katata, 1986), Nagoya Univ., Nagoya, 1986, 125-- 137.  

\bibitem{Zb}
D. Zagier: \emph{Zetafunktionen und quadratische K\"orper: eine Einf\"uhrung in die h\"ohere Zahlentheorie, Hochschultext}, Springer, Berlin, 1981.

\end{thebibliography}
\end{document}